\newcommand{\thrmrefer}[1]{\renewcommand\thethrm{\protect\ref{#1}}\addtocounter{thrm}{-1}}
\def\bg{{\overline{\Gamma}}}
\def\rank{{\rm rank}}
\def\End{\rm {End}}
\def\tk1{{\mbox{\scriptsize $K_1$}}}
\def \GF{\mathop{\mathrm{GF}}}
\newcommand{\Gr}{\mathop{\mathrm{Gr}}}
\newcommand{\PSL}{\mathop{\mathrm{PSL}}}
\newcommand{\PGamL}{\mathop{\mathrm{P}\Gamma\mathrm{L}}}
\newcommand{\PSp}{\mathop{\mathrm{PSp}}}
\newcommand{\bgam}{\overline{\Gamma}}
\newcommand{\gam}{\Gamma}
\newcommand{\cp}{\begin{proof} }
\newcommand{\tp}{\end{proof} }
\newtheorem{lemma}{Lemma}[section]
\newtheorem{theorem}[lemma]{Theorem}
\newtheorem{prob}[lemma]{Problem}
  \newenvironment{proof}{\noindent {\bf Proof:}\ }{~\rule{1.5ex}{1.5ex}\\}
  \newcounter{numpasso}
\title{Groups Synchronizing a  Transformation of Non-Uniform Kernel}
\author{Jo\~{a}o Ara\'{u}jo
\and
Wolfram Bentz
\and
Peter J. Cameron}
\date{\today}
\begin{document}

 \newenvironment{proofof}{\noindent {\sc Proof of Theorem}} {\phantom{a} \hfill \framebox[2.2mm]{ } \bigskip}
\maketitle

\begin{abstract}

This paper concerns the general problem of classifying the finite deterministic automata  that admit a synchronizing (or reset) word. (For our purposes it is irrelevant if the automata has initial  or final states.) Our departure point is the study of the transition semigroup associated to the automaton, taking advantage of the enormous and very deep progresses made during the last decades on the theory of permutation groups, their geometry and their combinatorial structure.  

Let $X$ be a finite set. We say that a primitive group $G$ on $X$ is {\em synchronizing} if $G$ together with any non-invertible map on $X$ generates a constant map. 
It is known (by some recent results proved by P. M. Neumann) that for some primitive groups $G$ and for some singular transformations $t$ of uniform kernel (that is, all blocks have the same number of elements), the semigroup $\langle G,t\rangle$ does not generate a constant map.   Therefore the following concept is very natural:  a primitive group $G$ on $X$ is said to be {\em almost synchronizing} if $G$ together with any map of non-uniform kernel generates a constant map.  In this paper we use two different methods to provide several infinite families of groups that are not synchronizing, but are almost synchronizing. The paper ends with a number of problems on synchronization likely  to attract the attention of experts in computer science, combinatorics and geometry, groups and semigroups, linear algebra and matrix theory.

 $\ $

\noindent{\em $2010$ Mathematics Subject Classification\/}: 68Q15, 68Q70, 20B15, 20B25, 20B40, 20M20, 20M35.
\end{abstract}

\section{Introduction}

Imagine that you are in a dungeon consisting of a number of interconnected caves, all of which appear identical. Each cave has a number of one-way doors of different colors through which you may leave; these lead to passages to other caves. There is one more door in each cave; in one cave the extra door leads to freedom, in all the others to instant death. You have a map of the dungeon with the escape door identified, but you do not know in which cave you are. If you are lucky, there is a sequence of doors through which you may pass which take you to the escape cave from any starting point. The example below shows this.

\begin{center}
\begin{tikzpicture}[->,>=stealth',shorten >=1pt,auto,node distance=3cm,
  thick,main node/.style={circle,fill=black!10,draw,font=\sffamily\small\bfseries}]

  \node[main node] (1) {1};
  \node[main node] (2) [below left of=1] {2};
  \node[main node] (3) [below right of=2] {3};
  \node[main node] (4) [below right of=1] {4};

  \path[every node/.style={font=\sffamily\tiny}]
    (1) 
	 edge [bend right] node[left] {RED} (3)
	 edge [bend left] node[right] {BLUE} (3)
    (2) edge [bend left] node [left] {RED} (1)
        edge [bend right] node[left] {BLUE} (3)
    (3) 
        edge [bend right] node[right] {RED} (4)
        edge [bend right] node[left] {} (1)
    (4) 
        edge [bend right] node[right] {BLUE} (1)
        edge [bend left] node[above] {RED} (2);
\end{tikzpicture}
\end{center}

In the example, the sequence (BLUE, RED,BLUE,BLUE) always brings you to cave number 3.  In this situation the dungeon (in fact a finite deterministic automaton) admits a {\em synchronizing}  word (or a {\em reset} word) and in general we are interested in the following questions: for a given finite deterministic automaton,
\begin{enumerate}
\item does there exist a synchronizing word?
\item if so, can we bound its length?
\end{enumerate}

For instructive  illustrations of the  practical usefulness of synchronization please see  \cite{AnVo,Tr11} and also Volkov's  talk \cite{Vo}; for the fast growing bibliography on the topic we refer the reader to the websites \cite{JEP,Tr}. 

The second of the questions above points to one of the oldest and most famous problems in automata theory, the so called \v Cern\'y problem, that prompted the publication of over one hundred papers, and the organization of several conferences and workshops fully dedicated to his conjetcure.  The first question is our concern in this paper, since before knowing how long it will take to escape the dungeon, the person wants to know if there is an escape at all. In the literature there exist algorithms to decide whether  a given automaton admits a synchronizing word or not  (see for example \cite{epp,Tr11}). But our approach is different.  To every deterministic automaton we can associate in a standard way  a semigroup and hence translate the questions above to the language of semigroups of transformations: for a given set $S$ of transformations  on a finite set,

\begin{enumerate}
\item  does $S$ generate a constant map?
\item if so, can we bound the length of a word, on the generators, that gives a constant map?  
\end{enumerate} 

One general goal is to try to answer these questions using the algebraic properties of the semigroup. In particular, if $S$  generates a group of permutations, it is possible to take advantage of the enormous body of knowledge and deep results that have been proved on them for the past decades. In fact we do not really need that the semigroup contains a group of permutations. For instance, if $K=g_1a_{1}g_2a_{2}\ldots g_ka_{k}g_{k+1}$ is a constant map (when the $a_{i}$ are singular maps and the $g_{i}$ are permutations), then 
\[
K=g_1a_{1}g_2a_{2}\ldots g_ka_{k}g_{k+1}=a^{g_1}_{1}a^{g_1g_2}_{2}a^{g_1g_2g_3}_{3}\ldots
a^{g_1g_2\ldots g_{k}}_{k} g_1g_2\ldots g_{k}g_{k+1},
\](where $a^g=gag^{-1}$).
 Thus $a^{g_1}_{1}a^{g_1g_2}_{2}a^{g_1g_2g_3}_{3}\ldots
a^{g_1g_2\ldots g_{k}}_{k}$ is also a constant, and is generated by conjugates of the $a_{i}$. 
Therefore, if we know that a given group $G$ together with any singular map generates a constant, then we know that any set of singular maps whose normalizer contains $G$, also generates a constant. (The normalizer of a set $S$ of transformations on $X$ is the group of permutations $g$ of $X$ such that $s^{g}\in \langle S\rangle$, for all $s\in S$.)

For a natural $n$, let $S_{n}$ and $T_{n}$ be, respectively, the symmetric group and the full transformation monoid on the set $X=\{1,\ldots,n\}$. 
We say that a group $G\leq S_{n}$ synchronizes a transformation $t\in T_{n}\setminus S_{n}$ if the semigroup $\langle G,t\rangle$ contains a constant map. 

In this paper we take the stand of  \cite{ben,pmn}, that is, we are interested in the groups that together with any  non-invertible transformation $t\in T_{n}\setminus S_{n}$ generate a constant.   A group $G\leq S_{n}$ is said to be non-synchronizing if there exists a partition $P$ of the set $X$ and a section $S$ such that, for all $g\in G$,  $Sg$ is a  section for $P$. Such a section is said to be $G$-regular and it is proved in \cite{pmn} that any partition admitting a $G$-regular section must be uniform (that is, all its blocks have the same size). A group that is not non-synchronizing is said to be synchronizing; a synchronizing group $G\leq S_{n}$ synchronizes every non-invertible transformation $t\in T_{n}$. (This result was first proved by the first author  \cite{ar} and was included in \cite{pmn}.)  

A synchronizing group must be primitive (\cite{ar,ben,pmn}).  In addition, if we have a singular map $t$ such that $\rank (t)>1$ and a primitive group $G$, that together with $t$, cannot generate maps of lower rank than $t$, then $G$ is non-synchronizing, and the kernel of $t$  induces a uniform partition. Therefore it is natural to ask the following question: is it true that a primitive group $G$ synchronizes every map  $t$ whose kernel is non-uniform?  We call  such groups {\em almost synchronizing}. Our  guess is that, with the possible exception of finitely many groups, every primitive group is almost synchronizing, but such a proof will likely require the (yet to be done) classification of the 1-transitive synchronizing groups. (It is obvious that 2-homogenous, or 2-set transitive, groups are synchronizing.)

We observe that an almost synchronizing group is primitive. If $G$ is imprimitive, then it fails to synchronize a map $t$ which collapses one block of imprimitivity into one single point in that block, and is the identity elsewhere. This map is not uniform. In addition $G$, together with $t$, cannot generate a constant since the rank  of any map in this semigroup will have at least the number of imprimitivity blocks. 

The goal of this paper is to provide two methods to find almost synchronizing groups. 
For the first method,  we introduce an hierarchy in the class of non-synchronizing groups and prove that the first level of that hierarchy, what we call {\em parameter $2$ non-synchronizing groups}, are almost synchronizing (provided they satisfy also a weak extra condition that holds for all parameter $2$ non-synchronizing groups we know); in addition we prove that there are infinitely many parameter $2$ non-synchronizing groups. We hope that this approach can be generalized for the parameter $k$ non-synchronizing groups (for $k>2$) and eventually a full classification of the almost non-synchronizing groups will be achieved. (We should observe, however, that in this paper we do not provide a classification of the parameter $2$ non-synchronizing groups; we only prove that there are infinite families of such groups.) 

In the second method we use a totally orthogonal approach, much more combinatorial, and based on some recent results by Godsil and Royle \cite{gr}. Again we prove that this method leads to infinite families of almost synchronizing groups, with little overlap with the groups found using the first method.

\section{First method: notation and preliminary results}

In this paper we make the global assumption that $G\le S_n$
is a primitive group.

Given a set $S\subseteq X$ with $1<|S|<n$,
we define a graph $\Gamma_S$ on the vertex set $X$
by the rule that $x\sim y$ for $x \ne y$ if and only if there is an element $g\in G$
with $\{x,y\}g\subseteq S$. Following the standard convention, the complement of $\gam_{S}$ will be denoted $\bgam_{S}$; in addition the neighborhood of an element $x$ in $\bgam_{S}$ will be denoted by $\bgam_{S}(x):=\{y\mid x\sim y\}$ and the closed neighborhood of $x$ in $\bgam_{S}$ will be denoted by $\bgam_{S}[x]:=\{x\}\cup \{y\mid x\sim y\}$.   Clearly the graph $\bgam_S$ is $G$-invariant
(that is, it is an edge-disjoint union of orbital graphs for $G$), and
hence is vertex-transitive; and $S$ is an independent set in $\bgam_S$.

\begin{lemma}
Suppose that $T$ is a clique in $\bgam_S$ which meets every $G$-translate
of $S$. Then
\[\bigcap_{x\in T}{\bgam}_S[x]=T.\]
\end{lemma}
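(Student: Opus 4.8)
The plan is to prove the two inclusions $T\subseteq\bigcap_{x\in T}\bgam_S[x]$ and $\bigcap_{x\in T}\bgam_S[x]\subseteq T$ separately. The first is a formality coming from $T$ being a clique; the second is where the hypothesis that $T$ meets every $G$-translate of $S$ is used, via a short argument by contradiction.

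For the containment $T\subseteq\bigcap_{x\in T}\bgam_S[x]$: if $x,y\in T$ are distinct then, since $T$ is a clique in $\bgam_S$, they are adjacent in $\bgam_S$, so $y\in\bgam_S[x]$; and of course $x\in\bgam_S[x]$. Thus every element of $T$ belongs to $\bgam_S[x]$ for every $x\in T$, which is exactly what is required.

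For the reverse containment I would proceed as follows. Let $z\in\bigcap_{x\in T}\bgam_S[x]$ and suppose, for contradiction, that $z\notin T$. Since $G$ is primitive it is transitive on $X$, and $S$ is non-empty, so there is $g\in G$ with $z\in Sg$ (pick any $s_0\in S$ and $g$ with $s_0g=z$). By hypothesis $T$ meets $Sg$, so choose $t\in T\cap Sg$; as $t\in T$ and $z\notin T$ we have $t\neq z$. Now $\{z,t\}\subseteq Sg$, equivalently $\{z,t\}g^{-1}\subseteq S$, so by definition $z$ and $t$ are adjacent in $\gam_S$; being distinct, they are therefore non-adjacent in the complement $\bgam_S$. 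On the other hand $z\in\bgam_S[t]$ together with $z\neq t$ forces $z$ and $t$ to be adjacent in $\bgam_S$ --- a contradiction. Hence $z\in T$, and combining the two inclusions gives the claimed equality.

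The only step that calls for any thought is the production of a $G$-translate $Sg$ containing the chosen element $z$, and this is handed to us for free by transitivity of $G$; I do not anticipate a genuine obstacle. It is perhaps worth noting in passing that the argument uses the clique hypothesis on $T$ only for the easy inclusion, and uses the meeting hypothesis together with transitivity of $G$ and $S\neq\emptyset$ for the other; the standing hypotheses $1<|S|<n$ and the independence of $S$ in $\bgam_S$ play no role in this particular lemma.
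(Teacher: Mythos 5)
Your proof is correct, but it takes a more elementary route than the paper. The paper first converts the hypothesis ``$T$ meets every $G$-translate of $S$'' into the numerical statement $|S|\cdot|T|=n$ (implicitly using the standard counting/clique--coclique argument for the vertex-transitive graph $\bgam_S$, in which each translate $Sg$ is an independent set meeting the clique $T$ in at most one point), concludes from this that $T$ is a \emph{maximal} clique, and then dismisses any extra point $z$ of the intersection on the grounds that $T\cup\{z\}$ would be a larger clique. You instead argue pointwise: given $z\notin T$, transitivity of $G$ puts $z$ in some translate $Sg$, the hypothesis gives a point $t\in T\cap Sg$ with $t\neq z$, and since $\{z,t\}g^{-1}\subseteq S$ the pair $z,t$ is adjacent in $\gam_S$ and hence non-adjacent in $\bgam_S$, contradicting $z\in\bgam_S[t]$. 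Your argument uses the hypothesis verbatim, needs only transitivity (which primitivity supplies) and $S\neq\emptyset$, and as a by-product it actually establishes the maximality of $T$ that the paper merely asserts; what the paper's route buys is the explicit identity $|S|\cdot|T|=n$ and the maximality of $T$ as recorded facts, which are referred to again later when the parameter $m(S,T)$ is introduced. Both proofs handle the inclusion $T\subseteq\bigcap_{x\in T}\bgam_S[x]$ in the same trivial way.
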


\begin{proof}
The hypothesis that $T$ meets every $G$-translate of $S$ is equivalent to
saying that $|S|\cdot|T|=n$; so $T$ is a maximal clique of $\bgam_{S}$. Clearly $T$ is
contained in the intersection on the left; if the intersection contained an
additional point $z$, then $T\cup\{z\}$ would be a clique, a contradiction.
\end{proof}

Let $S$ and $T$ be as in the previous lemma.
We define the parameter $m(S,T)$ to be the smallest number $m$ such that
the intersection of the closed neighbourhoods of \emph{any} $m$ points of
$T$ in $\bgam_S$ is equal to $T$. Said otherwise, $m(S,T)$ is one more than
the maximum number of neighbours in $T$ of any point outside $T$ in the
graph $\bgam_S$. Clearly $m(S,T)\le|T|$. Moreover, 
$m(S,T)\ge2$. For suppose that $m(S,T)=1$. Then $T={\bgam}_S[x]$
for any $x\in S$; so $T$ is a connected component of $\bgam_S$, contradicting
the assumed primitivity of $G$. 

The following lemma is immediate from the definitions. 
 
\begin{lemma}
Suppose that $m(S,T)=2$. Then the $G$-invariant graph $\bgam_S$ has the
property that every point $x$ has a neighbourhood $\bgam_S(x)$ having a
connected component which is a complete graph. In particular, if $\bgam_S$ is
edge-transitive, then $\bgam_S(x)$ is a disjoint union of complete graphs.
\end{lemma}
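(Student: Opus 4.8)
The plan is to unpack the hypothesis $m(S,T)=2$ into the single combinatorial fact that \emph{no vertex outside $T$ is adjacent in $\bgam_S$ to two distinct vertices of $T$}, and then simply exhibit $T\setminus\{t\}$ (for $t\in T$) as the desired complete connected component of $\bgam_S(t)$, transporting the conclusion to an arbitrary vertex by vertex-transitivity.

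First I would record the reformulation. Since $T$ is a clique, $T\subseteq\bgam_S[x]$ for every $x\in T$, so $T$ is contained in the intersection of the closed neighbourhoods of any subset of $T$. The condition $m(S,T)=2$ says this intersection already equals $T$ when we take \emph{two} points of $T$; equivalently, if $z\notin T$ were adjacent to distinct $x,y\in T$, then $z$ would lie in $\bgam_S[x]\cap\bgam_S[y]\setminus T$, a contradiction. So every vertex outside $T$ has at most one neighbour in $T$, and moreover $|T|\ge m(S,T)=2$. (Note we use $m(S,T)=2$ only in the form ``$m=2$ suffices''; the fact that $m=1$ fails is the earlier primitivity remark and plays no role here.) Now fix $t\in T$. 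The set $T\setminus\{t\}$ is a nonempty clique contained in $\bgam_S(t)$, hence connected; and there is no edge of $\bgam_S$ joining $s\in T\setminus\{t\}$ to $z\in\bgam_S(t)\setminus T$, since such a $z$ would be a vertex outside $T$ adjacent to the two distinct vertices $t,s$ of $T$. Thus $T\setminus\{t\}$ is a connected component of $\bgam_S(t)$ and it is a complete graph. For an arbitrary vertex $x$, vertex-transitivity gives $g\in G$ with $tg=x$; as $g$ preserves $\bgam_S$, we get $\bgam_S(x)=\bgam_S(t)g$ and $Tg\setminus\{x\}=(T\setminus\{t\})g$ is a complete connected component of it. This proves the first assertion.

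For the ``in particular'' clause, assume $\bgam_S$ is edge-transitive. Working with the edge $\{t,s\}$ ($t,s\in T$, which exists since $|T|\ge 2$), I would first identify $\bgam_S(t)\cap\bgam_S(s)=T\setminus\{t,s\}$: a common neighbour of $t$ and $s$ lies in the component $T\setminus\{t\}$ of $\bgam_S(t)$ and differs from $s$, and the reverse inclusion holds because $T$ is a clique. Hence $\{t,s\}\cup(\bgam_S(t)\cap\bgam_S(s))=T$ is a clique, and by edge-transitivity the same holds for every edge: $\{a,b\}\cup(\bgam_S(a)\cap\bgam_S(b))$ is a clique whenever $a\sim b$. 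Applying this to the edge $\{x,b\}$ for neighbours $a,b,c$ of $x$ with $a\sim b$ and $b\sim c$: both $a$ and $c$ lie in $\bgam_S(x)\cap\bgam_S(b)$, so $a\sim c$ or $a=c$. Thus adjacency within the induced graph $\bgam_S(x)$ is transitive along simple paths, so every connected component of $\bgam_S(x)$ is a clique, i.e.\ $\bgam_S(x)$ is a disjoint union of complete graphs.

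The argument is essentially a direct unwinding of the definitions, as the statement advertises, so there is no serious obstacle; the only points needing a little care are (i) using the hypothesis in the correct direction, as flagged above, and (ii) noting that edge-transitivity upgrades the clique-extension property from the single edge inside $T$ to \emph{all} edges of $\bgam_S$, which is exactly what lets us run the ``transitivity of adjacency in the link'' argument at an arbitrary vertex.
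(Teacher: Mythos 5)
Your proof is correct; note that the paper itself offers no argument here (it declares the lemma ``immediate from the definitions''), so there is nothing to match line by line. Your first half is exactly the intended unwinding: $m(S,T)=2$ means no vertex outside $T$ has two neighbours in $T$, whence $T\setminus\{t\}$ (nonempty since $2=m(S,T)\le|T|$) is a complete connected component of $\bgam_S(t)$, and vertex-transitivity of the $G$-invariant graph $\bgam_S$ transports this to every vertex. For the ``in particular'' clause you take a slightly longer route than necessary: you first prove the clique-extension property $\{a,b\}\cup\bigl(\bgam_S(a)\cap\bgam_S(b)\bigr)$ is a clique for the edge $\{t,s\}\subseteq T$, spread it to all edges by edge-transitivity, and then deduce that adjacency in each link is transitive along paths. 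A more direct argument is available: given any $x$ and any neighbour $y$ of $x$, edge-transitivity supplies an automorphism carrying $\{x,y\}$ onto $\{t,s\}$ with $s\in T\setminus\{t\}$; this automorphism maps the component of $y$ in $\bgam_S(x)$ onto the component of $s$ (or $t$) in $\bgam_S(t)$ (resp.\ $\bgam_S(s)$), which is $T\setminus\{t\}$ (resp.\ $T\setminus\{s\}$), hence complete. Since $y$ was arbitrary, every component of $\bgam_S(x)$ is a clique. Both arguments are valid; yours buys a reusable local criterion (the clique-extension property of edges), the direct one is shorter and closer to what ``immediate'' suggests.
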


In order to formulate our main theorem, we need some more definitions. 

A partition $P$ of $X$ is a \emph{non-synchronizing partition} for $G$ if
there is a subset $S$ of $X$ such that $|S\cap T|=1$ for every part $T$ of $P$.
The set $S$ is called a \emph{$G$-regular section} for $P$. It follows from earlier remarks that a group $G$ is non-synchronizing if and
only if it has a non-synchronizing partition.

We define a parameter $m(G)$ as follows: $m(G)$ is the maximum of $m(S,T)$,
over all pairs for which $T$ is a part of a non-synchronizing partition
$P$ for which $S$ is a $G$-regular section.
Note that such a set $T$ is indeed a clique in $\Gamma_S$ satisfying
$|S|\cdot|T|=n$. Note also that the existence of such $S,T,P$ is equivalent
to the group $G$ being non-synchronizing; the parameter $m(G)$ is not defined
unless $G$ is non-synchronizing. We call $m(G)$ the \emph{non-synchronizing
parameter} of $G$.

We also need an additional condition. For two distinct $G$-non-synchronizing
partitions $P$ and $P'$ of the same
rank $k$, let $M(P,P') = \frac{|P \cap P'|}{k}$; and set $M(G)$ to be
the maximum over all such pairs.

\section{First method: the main result}

We will show that every non-synchronizing group $G \leq S_n$  
with $m(G)=2$ and $M(G) \le 1/2  $ is almost synchronizing, that is, it 
synchronizes every non-uniform transformation. (Note that condition $M(G) \le 1/2  $ is satisfied by all examples of groups with $m(G)=2$ known to us.) 

Throughout this section let $G\leq S_{n}$ be such a group, and $t\in T_{n}$  be a map whose kernel is a non-uniform partition. 
Set 
$U=\langle G, t \rangle$, and let  $k_U$ be the smallest rank of the transformations in $U$, say $\rank(p)=k_U$, for some $p\in U$. It follows that  all the maps in $\langle G,p\rangle$ either are in $G$ or have rank $k_U$. Note that $k_U < \rank(t)$, as (by \cite{pmn}), for some $g\in G$, we have $\rank(tgt)<\rank(t)$. Our goal is to show that $k_U=1$.

\begin{lemma} \label{lem:exist q}
With notation as above, there exists a map $q\in U$ such that $\rank(q)>k_U$,
 $\rank(qgq)\in \{\rank(q),k_U\}$
 for all $g\in G$, and that there exists an $h \in G$ with $\rank(qhq)=k_U$.
\end{lemma}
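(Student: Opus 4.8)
The plan is to build the map $q$ iteratively, starting from $t$ itself and descending through ranks until we land just above $k_U$. Concretely, consider the chain of ranks that can occur among the transformations in $U$ that are reachable from $t$ by alternately multiplying on both sides by elements of $G$. Since $\rank(t) > k_U$ and, by \cite{pmn}, for every non-identity-rank singular map $s$ in $U$ there is some $g \in G$ with $\rank(sgs) < \rank(s)$, we can keep decreasing the rank. Formally, I would define a finite sequence $q_0 = t$, and given $q_i$ with $\rank(q_i) > k_U$, pick $g_i \in G$ with $\rank(q_i g_i q_i) < \rank(q_i)$ and set $q_{i+1} = q_i g_i q_i$ (or a suitable element of $\langle G, q_i\rangle$ witnessing the drop). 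This produces a strictly decreasing sequence of ranks bounded below by $k_U$, so it terminates; let $q$ be the last term with $\rank(q) > k_U$.

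The properties of $q$ then follow as follows. First, $\rank(q) > k_U$ holds by construction. Second, the existence of $h \in G$ with $\rank(qhq) = k_U$: by the \cite{pmn} fact applied to $q$ there is some $g \in G$ with $\rank(qgq) < \rank(q)$; but $qgq \in U$, so its rank is at least $k_U$; and since $k_U$ is realized inside $\langle G, q\rangle$ (because $q$ was the last term in the descending chain, the next step from $q$ must already hit rank $k_U$ — this is where the minimality in the choice of $q$ is used), we get $\rank(qgq) = k_U$, so we may take $h = g$. Third, the claim that $\rank(qgq) \in \{\rank(q), k_U\}$ for \emph{all} $g \in G$: here one uses that all maps in $\langle G, q\rangle$ are either in $G$ (hence invertible, irrelevant since $qgq$ is singular) or have rank in the set of ranks attainable in $\langle G, q\rangle$, and that the minimal such rank is $k_U$ while $\rank(qgq) \le \rank(q)$. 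The point is that $q$ sits at the \emph{bottom} of the descending chain, so no intermediate rank strictly between $k_U$ and $\rank(q)$ is produced by a single $g$-sandwich $qgq$; any such strictly-between value would let us continue the chain one more step contradicting the choice of $q$.

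I expect the main obstacle to be making the last point rigorous: a priori $\rank(qgq)$ for an arbitrary $g$ could be anything $\le \rank(q)$, and it is not automatic that the only values are $\rank(q)$ and $k_U$ without a slightly more careful choice of $q$. The fix is to choose $q$ to be a transformation of minimal rank among all elements of $U$ having rank strictly greater than $k_U$ — not merely the endpoint of one particular greedy chain. With that stronger choice, if some $g$ gave $k_U < \rank(qgq) < \rank(q)$, then $qgq \in U$ would be an element of strictly smaller rank still exceeding $k_U$, contradicting minimality; hence $\rank(qgq) \in \{\rank(q), k_U\}$ automatically, and the \cite{pmn} argument still supplies the $h$ with $\rank(qhq) = k_U$ since $\rank(qgq) < \rank(q)$ is forced for some $g$ and the only remaining option is $k_U$. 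This minimal-rank choice is clean and avoids any reference to the internal structure of $\langle G, q\rangle$; the conditions $m(G) = 2$ and $M(G) \le 1/2$ are not needed for this lemma and will only enter later in the argument.
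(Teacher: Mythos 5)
There is a genuine gap, and it lies exactly where you suspected trouble: the existence of $h\in G$ with $\rank(qhq)=k_U$. Your argument rests on the claim that Neumann's result \cite{pmn} gives, for every singular $s\in U$ (in particular for your chosen $q$), some $g\in G$ with $\rank(sgs)<\rank(s)$. That is not what \cite{pmn} says: the rank drop is only guaranteed when $\ker(s)$ is \emph{non-uniform} (a primitive group can only have $G$-regular sections for uniform partitions). It applies to $t$ itself, which is why $k_U<\rank(t)$, but the groups in this paper are non-synchronizing, so $U$ may well contain singular maps $s$ with uniform kernel for which $Xs$ is a $G$-regular section of $\ker(s)$, i.e.\ $\rank(sgs)=\rank(s)$ for \emph{every} $g\in G$. (If your stronger reading of \cite{pmn} were correct, one could iterate it to reach rank $1$ and the main theorem would be trivial.) Consequently, both your initial ``greedy chain'' and your final fix are unsupported: taking $q$ of minimal rank among elements of $U$ of rank exceeding $k_U$ does make the dichotomy $\rank(qgq)\in\{\rank(q),k_U\}$ automatic, but nothing forces $\rank(qgq)<\rank(q)$ for some $g$; such a $q$ could be ``stuck'' with $\rank(qgq)=\rank(q)$ for all $g$ (the rank $k_U$ being reached in $U$ only through other, longer products not of the sandwich form $qgq$), and then the third property fails for your $q$.

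The paper's proof is built precisely to avoid this: it defines $M$ as the set of elements $a\in U$ with $\rank(a)>k_U$ that \emph{already} admit a one-step sandwich to the minimal rank, i.e.\ $\rank(aha)=k_U$ for some $h\in G$; it shows $M\neq\emptyset$ by looking at a word $tg_1t\cdots g_nt$ of rank $k_U$ at the first point where the rank drops to $k_U$, observing that $a=tg_1t\cdots g_{n-1}t$ satisfies $\rank(ag_na)\le\rank(tg_1t\cdots g_nt)=k_U$; and it then takes $q$ of minimal rank \emph{within $M$} (not within all of $U$). The dichotomy for all $g$ is then obtained not by your automatic argument but by the identity $\rank((qgq)h(qgq))=\rank(qg(qhq)gq)\le\rank(qhq)=k_U$, which puts $qgq$ back in $M$ and invokes minimality in $M$. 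If you want to keep your choice of $q$, you would have to prove that some element of minimal rank above $k_U$ lies in $M$, which is not addressed in your proposal; restricting the minimization to $M$, as the paper does, is the clean way out. (You are right, though, that $m(G)=2$ and $M(G)\le 1/2$ play no role in this lemma.)
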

\begin{proof}
Let $$M=\{a\in \langle G,t\rangle \mid \rank(a)>k_U \mbox{ and } (\exists h\in G)\ \rank(aha)=k_U\}.$$

As $\rank(t)>k_U$,
there exists elements $g_1, \ldots,g_n \in G$, such that $\rank(tg_{1}t\ldots t g_{n-1}t)>k_U$ and
$\rank(tg_{1}t\ldots t g_{n}t)=k_U$. This implies that 
$$\rank\left((tg_{1}t\ldots g_{n-1} t)g_{n}(tg_{1}t\ldots tg_{n-1}t)\right)\le  \rank(tg_{1}t\ldots g_{n}t)=k_U$$ 
and in fact
$\rank\left((tg_{1}t\ldots g_{n-1}t)g_{n}(tg_{1}t\ldots tg_{n-1}t)\right) =k_U$, as $k_U$ is the smallest rank possible.
Hence $M$ is non-empty.
Let $q\in M$ be of minimal rank. We claim that $q$ satisfies the desired property. 

By definition of $M$, $\rank(q)>k_U$ and there exist $h\in G$, with $\rank(qhq)=k_U$.  Let $g\in G$. If $\rank(qgq)>k_U$, then  $\rank((qgq)h(qgq))=\rank(qg(qhq)gq)\leq \rank(qhq)=k_U$. As above, it follows that  $\rank((qgq)h(qgq))=k_U$. 
 Thus $qgq\in M$ and as $q$ has minimal rank in $M$, $\rank(qgq)=\rank(q)$. The result follows.  
\end{proof}

Fix any map $q$ satisfying the condition of Lemma \ref{lem:exist q}, and denote by $Q_{q}\subseteq G$ the set of group elements $g$ such that $\rank(qgq)=k_U<\rank(q)$. (If $q$ is clear we might write $Q$ instead of $Q_{q}$.)

We will make repeated use of transformations of the form $qgq$ and $qhqgq$, where $h,g \in G$. 
Let the partition induced by $Ker( q )$,  $Ker( qgq )$ and $Ker( qhqgq )$ be denoted by $K$, $K_{g}$, and $K_{h,g}$, 
respectively, and let $X_{g}=Xqgq$,
$X_{h,g}=Xqhqgq$. We will occasionally blur the distinction between partition and equivalence relations.

Observe that, for all $(g,h,h')\in Q \times G\times Q$, we have that $X_{g}h$ is a section for $K_{h'}$. Otherwise, $qgqhqh'q$ would have rank smaller than $\rank(qgq)=k_U$. 
Therefore $\{X_{g}\mid g\in Q\}$ is a set of $G$-regular sections for every partition in the set $\{K_{g}\mid g\in Q\}$. 

The following Lemma is a direct consequence of $m(G)=2$.

\begin{lemma} \label{lem:2intersect}
Suppose that $S \subseteq X$ is a $G$-section for two (necessarily  non-synchronizing) partitions $P$ and $P'$. 
If $x \ne y$ and $y \in [x]_P \cap [x]_{P'}$,
then $[x]_P=[x]_{P'}$.
\end{lemma}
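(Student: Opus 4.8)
The plan is to exploit the characterization of $m(G)=2$ that is already packaged in the definition of $m(S,T)$: for a non-synchronizing partition $P$ with $G$-regular section $S$ and part $T$, the parameter $m(S,T)$ being $2$ means that any point outside $T$ has at most one neighbour in $T$ in the graph $\bgam_S$ — equivalently, the intersection of the closed neighbourhoods $\bgam_S[x]\cap\bgam_S[x']$ of two points of $T$ is already equal to $T$. So the strategy is: translate the hypothesis $y\in[x]_P\cap[x]_{P'}$ into a statement about the two cliques $T:=[x]_P$ and $T':=[x]_{P'}$ sharing the edge $\{x,y\}$ in the common graph $\bgam_S$, then use $m(G)=2$ to force each of $T,T'$ to coincide with $\bgam_S[x]\cap\bgam_S[y]$, and hence with each other.

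The key steps, in order. First, recall that since $S$ is a $G$-regular section for $P$, each part $T$ of $P$ is a clique in $\Gamma_S$ with $|S|\cdot|T|=n$, hence a \emph{maximal} clique of $\bgam_S$ meeting every $G$-translate of $S$ (this is the setup of the first Lemma of Section~2); the same holds for $P'$. Note crucially that the graph $\bgam_S$ depends only on $S$, not on which partition we picked, so both $T=[x]_P$ and $T'=[x]_{P'}$ live in the \emph{same} graph $\bgam_S$. Second, observe that $x,y\in T$ and $x,y\in T'$, and since $T,T'$ are cliques containing both $x$ and $y$, we have $y\in\bgam_S[x]$ and hence $T\subseteq \bgam_S[x]$, $T'\subseteq\bgam_S[x]$, and symmetrically for $\bgam_S[y]$. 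Third, apply the definition of $m(G)$: since $m(G)=2$ and $T$ is a part of a non-synchronizing partition with $G$-regular section $S$, we have $m(S,T)\le 2$, so the intersection of the closed neighbourhoods of \emph{any} two points of $T$ already equals $T$; taking the two points $x,y$ gives $\bgam_S[x]\cap\bgam_S[y]=T$. Fourth, do the same for $T'$: here one needs $m(S,T')\le 2$ as well. This holds because $P'$ is also a non-synchronizing partition for $G$ with the same $G$-regular section $S$, so $(S,T')$ is one of the pairs over which $m(G)$ is the maximum, giving $m(S,T')\le m(G)=2$; applying the definition to the two points $x,y\in T'$ yields $\bgam_S[x]\cap\bgam_S[y]=T'$. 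Finally, combine: $T=\bgam_S[x]\cap\bgam_S[y]=T'$, i.e. $[x]_P=[x]_{P'}$.

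I expect the only subtle point — really a bookkeeping point — to be making sure the definitions line up so that $m(S,T)\le 2$ applies to \emph{both} cliques coming from the two partitions. The definition of $m(G)$ quantifies over all pairs $(S,T)$ with $T$ a part of \emph{some} non-synchronizing partition $P$ for which $S$ is a $G$-regular section; since both $P$ and $P'$ are such partitions and share the section $S$, both $(S,[x]_P)$ and $(S,[x]_{P'})$ are admissible pairs, so $m(G)=2$ bounds $m(S,\cdot)$ for each. Everything else is the direct unwinding of "the intersection of the closed neighbourhoods of any $m(S,T)$ points of $T$ equals $T$" with $m(S,T)=2$ and the two chosen points $x,y$; no real obstacle beyond that. (Note the hypothesis $M(G)\le 1/2$ is not needed here — this lemma is, as the text says, "a direct consequence of $m(G)=2$".)
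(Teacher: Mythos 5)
Your proposal is correct and follows essentially the same route as the paper, whose one-line proof is exactly the identity $[x]_P=\bgam_S[x]\cap\bgam_S[y]=[x]_{P'}$ derived from $m(G)=2$; you have merely spelled out the bookkeeping that both pairs $(S,[x]_P)$ and $(S,[x]_{P'})$ fall under the definition of $m(G)$, which the paper leaves implicit.
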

\begin{proof} As $m(G)=2$, we get that
$$ [x]_P=\bg_{S}[x]\cap \bg_{S}[y]=[x]_{P'}. $$ 
\end{proof}

The next two results will show that if  $G$ is not almost synchronizing, then the transformation $q$ will 
take a very specific form.

\begin{lemma} \label{lcol} 
If $k_U>1$, then for any $g \in G$ , a block of $K_g$ is either a block of 
$K$ or a union of singleton blocks from $K$. 
\end{lemma}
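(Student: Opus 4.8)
The plan is to first reduce to the case $g\in Q$, then argue by contradiction, using $m(G)=2$ to force a very rigid relationship among the partitions $K_{g'}$ $(g'\in Q)$, and finally collapse that relationship with $M(G)\le 1/2$ and the primitivity of $G$. For the reduction: if $g\notin Q$ then $\rank(qgq)=\rank(q)$ by Lemma~\ref{lem:exist q}, and since $qgq=q\cdot(gq)$ the partition $K$ refines $K_g$; having the same number of classes, $K=K_g$, so the conclusion is trivial. So assume $g\in Q$, i.e.\ $\rank(qgq)=k_U$. By the observation preceding Lemma~\ref{lem:2intersect}, every $X_{g'}$ $(g'\in Q)$ is a $G$-regular section of $K_g$; hence by the result of \cite{pmn} that a partition admitting a $G$-regular section is uniform, $K_g$ is uniform, with all blocks of size $n/k_U\ge 2$. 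As $K$ refines $K_g$, every block of $K_g$ is a union of blocks of $K$, so the statement can fail only if some block $\beta$ of $K_g$ is a union $C_1\cup\dots\cup C_r$ of blocks of $K$ with $r\ge 2$ and, say, $|C_1|\ge 2$; I assume this, for a contradiction.

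Now I bring in $m(G)=2$. Fix distinct $x,y\in C_1$ and some $g''\in Q$, and let $g'\in Q$ be arbitrary. Then $X_{g''}$ is a $G$-regular section of both $K_g$ and $K_{g'}$, and $y\in C_1\subseteq[x]_{K_g}\cap[x]_{K_{g'}}$, so Lemma~\ref{lem:2intersect} yields $[x]_{K_g}=[x]_{K_{g'}}$; that is, $\beta$ is a block of $K_{g'}$ as well. Since $g'\in Q$ was arbitrary, $\beta$ is simultaneously a block of $K_{g'}$ for \emph{every} $g'\in Q$; equivalently, each map $qg'q$ $(g'\in Q)$ is constant on $\beta$.

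To contradict this I would distinguish two cases. If the partitions $K_{g'}$ $(g'\in Q)$ are not all equal, I would rerun the previous step for every non-singleton block of $K$ that is properly contained in its $K_g$-block; each such block forces the block of $K_g$ containing it to be common to all the $K_{g'}$, and I expect to obtain more than $\tfrac{k_U}{2}$ common blocks of two distinct $K_{g'}$, contradicting $M(G)\le 1/2$. If instead all the $K_{g'}$ coincide with one partition $K^{*}$ — which is exactly what happens when $k_U=2$, a case in which $M(G)\le 1/2$ says nothing — then $K^{*}$ is a uniform non-synchronizing partition strictly refined by $K=\Ker q$, and here I would use that each $qg'q$ $(g'\in Q)$ has image $X_{g'}\subseteq Xq$ meeting every block of $K^{*}$ exactly once, hence induces a permutation of the $k_U$ blocks of $K^{*}$; combining this with the primitivity of $G$ should show that $\Ker q$ cannot strictly refine $K^{*}$, contradicting $r\ge 2$. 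The hard part is this last case: the degenerate situation where all the $K_{g'}$ collapse to a single partition needs a genuine primitivity argument rather than the counting argument available otherwise, and pinning down how the induced permutations of the block system of $K^{*}$ fit together with the $G$-action is where the real work of the proof will lie.
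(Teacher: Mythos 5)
Your reduction to $g\in Q$ and your first use of Lemma~\ref{lem:2intersect} are sound: if a non-singleton block $C_1$ of $K$ were properly contained in a block $\beta$ of $K_g$, then $\beta$ would indeed be a block of $K_{g'}$ for every $g'\in Q$. But the proof is not finished from there, and the missing step is exactly the paper's key idea, which your plan never reaches. The paper does not stay inside the family $\{K_{g'}\colon g'\in Q\}$: writing $A_1q=\{a_1\}$, $A_2q=\{a_2\}$ for two distinct $K$-blocks inside a block $B$ of $K_g$, primitivity (connectivity of the orbital graph through $\{a_1,a_2\}$, using $k_U>1$ so that $B$ is a proper nonempty subset) gives $h\in G$ with $a_1h\in B$ and $a_2h\notin B$; the auxiliary partition $K_{h,g}=\Ker(qhqgq)$ still admits every $X_g g'$ as a section (otherwise $qgqg'qhqgq$ would have rank below $k_U$), so Lemma~\ref{lem:2intersect} applies to the pair $K_g,K_{h,g}$, while by construction the $A_1$-block of $K_{h,g}$ excludes $A_2$. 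If $|A_1|\ge2$ these two blocks would have to coincide, a contradiction; hence $A_1$ is a singleton, and symmetry finishes the proof. Notably, neither $M(G)\le 1/2$ nor any counting of common blocks is needed for this lemma.

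Your two concluding cases do not close the hole. In Case A, the claim that you can produce more than $k_U/2$ blocks common to two distinct partitions $K_{g'}$ is precisely the content of Lemma~\ref{ker partitions} and the remark following it, which are proved later \emph{using} the present lemma; at this stage nothing about the block structure of $K$ is known (the non-singleton $K$-blocks could all sit inside few $K_g$-blocks), so that count is unavailable and the argument would be circular. In Case B, the stated target --- that $\Ker q$ cannot strictly refine $K^{*}$ --- is the wrong thing to aim for: since $g'\in Q$ means $\rank(qg'q)=k_U<\rank(q)$, the kernel $K$ \emph{always} strictly refines $K_{g'}=K^{*}$, so no contradiction can come from strict refinement alone. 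What must be excluded is only the finer configuration of a non-singleton $K$-block lying properly inside a $K^{*}$-block, and the observation that each $qg'q$ permutes the blocks of $K^{*}$ does not do this by itself; one still needs a device such as the element $h$ above, producing a rank-$k_U$ kernel that shares a $G$-regular section with $K_g$ yet separates $A_1$ from $A_2$.
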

\begin{proof} We may assume that $g\in Q$, the result being trivially true otherwise.
Let $A_1$ and $A_2$  be distinct blocks of $K$ such that $A_1 \cup A_2$ is contained in a 
block $B$ of $K_g$, say $A_1 q =\{a_1\}$ and $A_2 q =\{a_2\}$ with $a_1 \ne a_2$. 

As $k_U>1$, $\{B,X-B\}$ is a partition of $X$. 
By primitivity, there exists an $h \in G$ such that $a_1 h \in B$ and $ a_2 h\not\in B$. Note that for any
$g' \in G$, 
$X_g{g'}$ is a section for both $K_g$ (as noted above) and $K_{h,g}$ (otherwise, $qgqg'qhqgq$ would have rank smaller then $qgq$).
So, by the previous lemma, we know that any two blocks of $K_g $ and $K_{h,g}$ that have two elements in common
must agree.

However, the $A_1$-block of $K_g$ contains $A_2$ while the $A_1$-block of $K_{h,g}$ does not. Hence $A_1$
cannot contain two elements and must be a singleton. The general result follows by symmetry.
\end{proof}


\begin{lemma} \label{ker partitions}
Let $k_U>1$.
Then the partition $K$ consists of $r$ sets of order $1$, and $s$ sets of order $p>1$,
 such that $r$ is a multiple of $p$, $ r <  sp$, $r \ge 1$, $s \ge 1$.
\end{lemma}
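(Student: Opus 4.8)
The plan is to analyze the structure of $K = \mathrm{Ker}(q)$ using Lemma~\ref{lcol} together with the fact that $\{X_g : g \in Q\}$ are $G$-regular sections for the partitions $\{K_g : g \in Q\}$, combined with the known result from \cite{pmn} that a partition admitting a $G$-regular section must be uniform. First I would record the basic setup: since $X_g$ is a $G$-regular section for $K_g$ (for $g \in Q$), and since $q$ itself has rank $\mathrm{rank}(q) > k_U$, I want to relate $K$ to the uniform partitions $K_g$. The key structural input is Lemma~\ref{lcol}: every block of $K_g$ is either a block of $K$ or a union of singleton blocks of $K$. So if $K$ has $r$ singletons and blocks of various sizes $>1$, then $K_g$ is obtained from $K$ by merging some collection of the singleton blocks together (into blocks that partition the union of singletons), while leaving the non-singleton blocks of $K$ untouched.

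Next I would argue that $K$ cannot have blocks of two different sizes both exceeding $1$. Since $K_g$ is uniform of some size, say $c$, and the non-singleton blocks of $K$ all survive as blocks of $K_g$, every non-singleton block of $K$ has size exactly $c$; hence all non-singleton blocks of $K$ have a common size $p := c > 1$. Moreover the merged singletons form blocks of size $c = p$, so $p \mid r$ where $r$ is the number of singletons. This gives $r$ a multiple of $p$, and shows $K$ consists of $r$ singletons and $s$ blocks of size $p$. For the inequalities: $s \ge 1$ because $K$ is not itself uniform (it is $\mathrm{Ker}(q)$ where $q$ is used to build a non-uniform transformation — more precisely, if $s = 0$ then $K$ is already uniform and $q$ together with $G$ behaves like the original uniform situation; I would need to check this contradicts the existence of $h$ with $\mathrm{rank}(qhq) = k_U < \mathrm{rank}(q)$, or rather that it forces $K$ uniform contradicting our ability to apply Lemma~\ref{lcol} nontrivially). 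And $r \ge 1$: if $r = 0$ then $K = K_g$ for all $g \in Q$ since there are no singletons to merge, so $K$ itself is a $G$-regular-sectioned partition, and then $qgq$ has the same kernel as $q$, forcing $\mathrm{rank}(qgq) = \mathrm{rank}(q) > k_U$ for all $g$, contradicting $Q \ne \emptyset$ (which holds by Lemma~\ref{lem:exist q}, as there exists $h$ with $\mathrm{rank}(qhq) = k_U$).

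The remaining inequality $r < sp$ is the one I expect to be the main obstacle. Here $sp$ is the total number of elements lying in non-singleton blocks of $K$, and $r$ is the number of elements in singleton blocks, so $r < sp$ says fewer than half the points of $X$ are "free" singletons. I would try to derive this from the condition $M(G) \le 1/2$. The idea: the partitions $K_g$ (for varying $g \in Q$) are all $G$-non-synchronizing partitions of the same rank $k_U$, and they all agree on the non-singleton blocks of $K$ — those $s$ blocks of size $p$ are common to all of them. If two distinct such partitions $K_g, K_{g'}$ existed, then $M(K_g, K_{g'}) \ge s/k_U$ (they share at least the $s$ fixed blocks), where $k_U = s + r/p$ is the common rank. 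The condition $M(G) \le 1/2$ would then give $s/(s + r/p) \le 1/2$, i.e. $s \le r/p$, i.e. $sp \le r$ — the wrong direction! So I'd need to instead rule out that all $K_g$ coincide, or reinterpret: perhaps the correct reading is that since $K$ is non-uniform with $s \ge 1$, and since collapsing must genuinely happen, one shows two distinct $K_g$ must occur, and then the number of agreeing blocks beyond the forced $s$ must stay below half, forcing $r < sp$ via a counting that I would need to set up carefully using primitivity and the action of $G$ on the singleton-blocks. I would spend most of the effort making this last counting argument precise, likely by examining how $G$ permutes the $r$ singleton-blocks and invoking $M(G) \le 1/2$ on a well-chosen pair of partitions to bound the overlap.
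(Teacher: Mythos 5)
Your first half is sound and matches the paper: for $g\in Q$ the partition $K_g$ is uniform (Neumann's theorem \cite{pmn}, since $X_g$ is a $G$-regular section for it), Lemma \ref{lcol} forces every non-singleton block of $K$ to survive as a block of $K_g$, so all non-singleton blocks share a common size $p$ equal to the uniform block size, the singletons merge into blocks of size exactly $p$ (whence $p\mid r$), and your argument for $r\ge 1$ (no singletons would give $K_g=K$ by Lemma \ref{lcol}, so the rank could never drop, contradicting the existence of $h$ with $\rank(qhq)=k_U$) is exactly the right one. Your hedge about $s\ge 1$ is easily closed: if $s=0$ then $q$ is a permutation, so $\rank(qhq)=n>k_U$, again contradicting Lemma \ref{lem:exist q}.

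The genuine gap is the inequality $r<sp$, which you leave open, and the route you propose cannot work. As you yourself compute, applying $M(G)\le 1/2$ to two partitions $K_g,K_{g'}$ that share the $s$ non-singleton blocks yields $sp\le r$, the reverse inequality; moreover the logical order in the paper is the opposite of what you suggest: Lemma \ref{ker partitions} (specifically $r<sp$, i.e.\ more than half of the $k_U$ blocks of each $K_g$ are blocks of $K$) is the ingredient that later makes the $M(G)\le 1/2$ hypothesis bite in the proof of Theorem \ref{main}, so trying to extract $r<sp$ from $M(G)$ is backwards. The paper's argument is independent of $M(G)$: pick $d\notin \mathrm{im}(q)$ and let $f$ be the image under $q$ of a non-singleton block; by primitivity there is $h\in G$ with $dh$ in the union $R$ of the singleton classes and $fh$ in its complement $W$ (if no such $h$ existed, the rule ``$xh\in R$ for the same set of $h$'' would define a nontrivial proper $G$-invariant equivalence). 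Then $\rank(qhq)<\rank(q)$: otherwise $\mathrm{im}(q)h$ would be a section of $K$ and would have to meet the singleton class $\{dh\}$, forcing $d\in\mathrm{im}(q)$. Hence $\rank(qhq)=k_U$ by Lemma \ref{lem:exist q}, so $h\in Q$ and $K_h$ is uniform with blocks of size $p$; by Lemma \ref{lcol} each block of $K_h$ not already a block of $K$ is a union of $p$ singleton classes $\{u_1\},\dots,\{u_p\}$, and the $p\ge 2$ distinct points $u_1qh,\dots,u_pqh$ lie in one $K$-class, which must therefore be non-singleton. Since every singleton class gets merged in this way, $h$ maps all $r$ images of singleton classes into $W$; as $fh\in W$ as well and $f$ is not one of those images, $|W|=sp\ge r+1$, i.e.\ $r<sp$. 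This separation-by-primitivity step, not the $M(G)$ condition, is the missing idea in your proposal.
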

\begin{proof}
Pick any $g \in Q_q$. In order for $qgq$ to have smaller rank then $q$,
 only singleton classes of the kernel can form larger blocks of $K_g$ by the previous lemma. As $qgq$ is uniform by the theorem of Neumann~\cite{pmn}, the non-singleton classes must have the same size, say $p$.   Moreover, the singletons must combine in multiples of $p$. As $q$
is non-uniform we also get that $s,r \ge 1$

Now pick an element $ d \notin im(q)$ and an image $f$ of a non-singleton block of $K$. Let $h \in G$ be such that $dh$ is in the union of the singleton classes of
$K$ and $fh$ in its complement. Note that $\rank(qhq)< \rank(q)$ as $dhq \notin X_h$, and hence 
$\rank(qhq)=k_U$. However this requires that $h$ maps all images of singleton classes into non-singleton classes (of $K$). As $h$ also maps $f$ into this set, it follows that $r$ is strictly less than $sp$.
\end{proof}

The lemma in particular shows that for any $g \in Q$, over half of the $K_g$-blocks are also blocks of $K$, as opposed to 
blocks made up of singleton classes of $K$.
We are now ready to prove our main theorem.

\begin{theorem}\label{main}

Let $G \le S_n$ be a non-synchronizing group with $m(G)=2$ and $M(G) \le 1/2$.  If $t \in T_n$ is a transformation such that $Ker(t)$ is a non-uniform 
partition, then $\langle G,t\rangle$ contains a constant function; that is, $G$ is almost synchronizing.
\end{theorem}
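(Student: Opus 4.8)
The plan is to argue by contradiction: assume $k_U>1$, and derive a contradiction from the structure forced by Lemmas \ref{lcol} and \ref{ker partitions} together with the hypothesis $M(G)\le 1/2$. By Lemma \ref{ker partitions}, the kernel partition $K$ of $q$ consists of $r$ singleton blocks and $s$ blocks of common size $p>1$, with $p\mid r$, $1\le r<sp$, $s\ge 1$. For each $g\in Q$, Lemma \ref{lcol} says that the partition $K_g$ is obtained from $K$ by leaving every size-$p$ block intact and amalgamating the $r$ singletons into larger blocks (necessarily of size $p$, by uniformity of $qgq$, so the number of blocks of $K_g$ built from singletons is $r/p$, while the $s$ size-$p$ blocks of $K$ survive unchanged). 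So the fraction of blocks of $K_g$ that are genuine blocks of $K$ is $s/(s+r/p)>1/2$ since $r<sp$. This is the key numerical fact highlighted just before the theorem.

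Next I would look at two distinct elements $g_1,g_2\in Q$ with $K_{g_1}\ne K_{g_2}$ and compare. I would first observe that $X_{g_1}=Xqg_1q$ is simultaneously a $G$-regular section for $K_{g_1}$ and for $K_{g_2}$ (this is the observation, already made in the text, that $X_g h$ is a section for every $K_{h'}$, applied with the relevant group elements), so both $K_{g_1}$ and $K_{g_2}$ are non-synchronizing partitions of the same rank $k_U$ sharing a common $G$-regular section. Hence Lemma \ref{lem:2intersect} applies: any block of $K_{g_1}$ and any block of $K_{g_2}$ that share two points must coincide. Now every size-$p$ block $B$ of $K$ (with $p\ge2$) is a block of both $K_{g_1}$ and $K_{g_2}$ — it is a block of $K$ that is not a union of singletons, so by Lemma \ref{lcol} it survives in $K_g$ for every $g\in Q$. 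Since $|B|=p\ge 2$, such blocks are common to $K_{g_1}$ and $K_{g_2}$. Therefore $|K_{g_1}\cap K_{g_2}|\ge s$, so $M(K_{g_1},K_{g_2})=|K_{g_1}\cap K_{g_2}|/k_U\ge s/(s+r/p)>1/2$, contradicting $M(G)\le 1/2$ — unless $K_{g_1}=K_{g_2}$.

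It remains to rule out the case that $K_g$ is the same partition $K^*$ for all $g\in Q$. In that situation every transformation $qgq$ with $g\in Q$ has the same kernel $K^*$, a uniform partition with $s+r/p$ blocks, of which the $s$ size-$p$ blocks of $K$ are blocks and the remaining $r/p$ are amalgams of the $r$ singletons of $K$. I would then derive a contradiction by exploiting primitivity: the $r$ singleton positions of $K$ and the $sp$ non-singleton positions form a proper nontrivial subset structure, and one can choose $h\in G$ moving an image point of a non-singleton block into the singleton region while moving an image of a singleton block out of it (as in the proof of Lemma \ref{ker partitions}), forcing $qhq\in Q$ with $K_h\ne K^*$ because a block of $K^*$ consisting of singletons of $K$ would be split, or a size-$p$ block of $K$ would have to merge — either way contradicting the constancy of $K_g$ over $Q$. (Alternatively, since all $K_g$ coincide, the set of singleton positions of $K$ would be a block of imprimitivity or yield a proper $G$-congruence, again contradicting primitivity.) Hence $k_U>1$ is impossible, so $k_U=1$, i.e.\ $\langle G,t\rangle$ contains a constant map.

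The main obstacle I anticipate is the last step: showing that $K_g$ cannot be independent of $g\in Q$. The comparison argument via Lemma \ref{lem:2intersect} is clean, but it only yields a contradiction when two \emph{distinct} $K_g$ exist; closing off the "all equal" case requires a separate primitivity argument, and making the choice of $h$ precise — ensuring $qhq$ genuinely has a different kernel and not merely a different image — is the delicate part. A careful bookkeeping of which points of $X$ lie in $\im(q)$, which blocks are singletons, and how $h$ permutes them will be needed there.
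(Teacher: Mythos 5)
Your setup (reduction to $k_U>1$, the use of Lemmas \ref{lcol} and \ref{ker partitions}, and the count showing that the $s$ non-singleton blocks of $K$ form more than half of the $k_U$ blocks of any minimal-rank kernel) is correct and coincides with the paper's argument, and the comparison of two kernels sharing more than $k_U/2$ parts via $M(G)\le 1/2$ is the right mechanism. The genuine gap is exactly where you flag it: comparing $K_{g_1}$ with $K_{g_2}$ for $g_1,g_2\in Q$ only gives a contradiction when two \emph{distinct} such kernels exist, and your proposed ways of excluding the residual case (all $K_g$, $g\in Q$, equal to a single partition $K^*$) do not work as sketched. The parenthetical alternative fails outright: the union of the singleton blocks of $K$ is determined by $q$ and is not $G$-invariant, so it is not a candidate block of imprimitivity and yields no $G$-congruence. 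The main suggestion also breaks down: choosing $h$ so that the image of a non-singleton block lands in the singleton region while the image of a singleton block leaves it forces neither $h\in Q$ (no collision of two points of $\im(q)h$ inside one $K$-block is guaranteed, so $\rank(qhq)$ may stay equal to $\rank(q)$) nor, even granting $h\in Q$, that $K_h\ne K^*$: every $K_h$ with $h\in Q$ contains all non-singleton blocks of $K$ and differs from $K^*$ only in how the $r$ singletons are regrouped, and your conditions on $h$ do not control that regrouping.

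The paper avoids the case split by comparing $K_g$ not with another $K_{g'}$ but with $K_{h,g}=\Ker(qhqgq)$, for a cleverly chosen $h$. Fix $g\in Q$ and a pair $(a,b)\in K_g\setminus K$ (it exists since $k_U<\rank(q)$); since $aq\ne bq$, primitivity gives $h\in G$ with $aqh\in B$ and $bqh\notin B$ for a block $B$ of $K_g$. Then $qhqgq$ has rank $k_U$, so $K_{h,g}$ is a uniform non-synchronizing partition (with $X_g$ as a $G$-regular section) which is coarser than $K$ and has blocks of size $p$, hence contains all $s$ non-singleton blocks of $K$; thus $K_g$ and $K_{h,g}$ share more than $k_U/2$ parts, and $M(G)\le 1/2$ forces $K_g=K_{h,g}$. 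But the choice of $h$ gives $a\,qhqgq\ne b\,qhqgq$, i.e.\ $(a,b)\notin K_{h,g}$ while $(a,b)\in K_g$ --- the desired contradiction, with no need to know whether the partitions $K_{g'}$, $g'\in Q$, are all equal or not. If you wish to keep your two-step structure, the ``all equal'' case has to be closed by essentially this construction; as written, your proof is incomplete at that point.
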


\begin{proof}
Assume otherwise, i.e.\ that in the notation from above 
$k_U>1$, and let $q$ be an transformation satisfying the conditions of
Lemma \ref{lem:exist q} and  Lemma \ref{ker partitions}.
Let $g \in Q_q$ such that  $\rank(qgq)=k_U$, and let $(a,b) \in K_g$ such that $(a,b) \notin K$.
Let $h \in G$ satisfy $aqh \in B$,  $ bqh \notin B$ for some block $B$ of $K_g$. 

Consider the functions $qhqgq$ and $qgq$, both of rank $k_U$. Note that $X_g$ must be a $G$-regular section for both 
$K_{h,g}$ and $K_g$. All non-singleton blocks of $K$ are in $K_g$ by Lemma \ref{lcol}. The same holds for $K_{h,g}$, as it  needs to be unifom and contained in $K$.

By Lemma  \ref{ker partitions} we have that the number of these blocks  is larger than  $k_U/2$. Our assumption $M(G) \le 1/2$
now shows that
$K_g=K_{h,g}$. However, it is easy to check that $(a,b) \notin K_{h,g}$. 

Hence $k_U =1$ and  $\langle G,t\rangle$ contains a constant function.
\end{proof}

\section{First method: examples}

We now give two infinite classes of examples and one sporadic example of
groups which satisfy the hypotheses of the main theorem, and hence which
synchronize any non-uniform mapping.

\paragraph{A sporadic example} The automorphism group of the $3\times3$ grid
(the wreath product of $S_3$ with $S_2$) and its primitive subgroup of
index~$2$, are examples. For there are four non-synchronizing partitions:
the rows and columns of the grid, and two partitions into diagonal sets 
(corresponding to the even and odd permutations of $\{1,2,3\}$ respectively).
It is easy to see that, for these groups, $m(G)=2$, and $M(G)=0$ (since
distinct non-synchronizing partitions have no sets in common).

\paragraph{Examples from projective planes} Let $G$ be the group
$\mathrm{P}\Gamma\mathrm{L}(3,q)\cdot2$ or a subgroup containing
$\mathrm{PSL}(3,q)\cdot2$, where $2$ denotes the \emph{inverse transpose
automorphism}. The subgroup of index~$2$ acts on the projective plane over
the field $\mathrm{GF}(q)$, and the outer automorphism induces a duality of
the plane interchanging points and lines. Then $G$ acts on $X$, the set of
\emph{flags} (incident point-line pairs) in the projective plane. There
are three orbits of $G$ on pairs of flags:
\begin{itemize}
\item $O_1=\{\{(P,L),(P',L')\}: P=P'\hbox{ or }L=L'\}$;
\item $O_2=\{(P,L),(P',L')\}: P\in L'\hbox{ or }P'\in L\}$;
\item $O_3$, the remaining pairs (consisting of ``opposite'' flags).
\end{itemize}
There are only two non-synchronizing partitions for $G$. In the first
partition, two flags are in the same part if they share a point; in the second,
they are in the same part if they share a line. These two partitions have no
common parts, so $M(G)=0$.

A $G$-regular section for either partition consists of a set of $q^2+q+1$
flags, no two sharing a point or a line. Such a set can be constructed as
an orbit on flags of a \emph{Singer cycle} of $G$, a cyclic subgroup of order
$q^2+q+1$ permuting points and lines transitively. It must contain a pair
of flags lying in $O_2$ and a pair lying in $O_3$. So the edges of 
$\bar\Gamma_S$ are the pairs in $O_1$. Now any two flags adjacent in this graph
determine a common point or line, so the intersection of their closed 
neighbourhood consists of all flags using this point or line. Thus
$m(G)=2$.

\paragraph{Examples from symplectic groups} Let $G$ be the projective 
symplectic group $\mathrm{PSp}(4,q)$, where $q$ is a power of~$2$, or any
group obtained by adjoining field automorphisms. The group $G$ acts on the
symplectic generalized quadrangle $W(q)$ whose points are those of the
$3$-dimensional projective space over $\mathrm{GF}(q)$, and whose lines are
the lines of projective space which are totally isotropic with respect to
a symplectic polarity. The only $G$-invariant graphs are the
\emph{collinearity graph} of the quadrangle (two vertices joined if they are
orthogonal with respect to the symplectic form) and its complement.

The only non-synchronizing partitions for $G$ are \emph{spreads} of lines,
families of pairwise disjoint lines covering the point set; the $G$-section
for such a partition is an \emph{ovoid}, a set of points meeting every line
in precisely one point. (The other possibility for a non-synchronizing 
partition would be a partition into ovoids; but a theorem of
Butler~\cite{Butler1} shows that any two ovoids intersect in an odd number
of points, so no such partition exists.) A spread consists of $q^2+1$ lines,
each of cardinality m$q+1$. If $S$ is an ovoid, then $\bar\Gamma_S$ is the
collinearity graph. A generalized quadrangle has the property that any point
$x$ not on a line $l$ is collinear with a unique point of $l$; so $m(G)=2$.

Since the characteristic is $2$, the generalized quadrangle admits a 
\emph{duality} interchanging points and lines and preserving incidence.
Such a duality interchanges spreads with ovoids; so the maximum size of the
intersection of two spreads is equal to the maximum size of the intersection
of two ovoids. This parameter was first studied by Glynn~\cite{Glynn}, who
showed that two ovoids intersect in at most $q(q-1)/2$ points. (A more
accessible proof is in Butler~\cite{Butler2}.) So
\[M(G)\le\frac{q(q-1)}{2(q^2+1)}<\frac{1}{2},\]
and our main theorem shows that $G$ is almost synchronizing.

\section{Second method: graphs}

The core of a graph $\Gamma$ is the smallest graph $\Delta$ that is homomorphically equivalent to $\Gamma$ (that is, there exist homomorphisms in both directions). The core of $\Gamma$ is unique up to isomorphism and is an induced
subgraph of $\Gamma$. It is well known that all the endomorphisms of a core are automorphisms. 

We will denote the clique number of a graph $\Gamma$ by $\omega(\Gamma)$, and its chromatic number by $\chi (\Gamma)$.

There is a natural connection between transformation monoids and graphs.
To every graph $X$, we associate its \emph{endomorphism monoid} $\End(X)$.
In the other direction, given a transformation monoid $M$ on $\Omega$, we
define a graph $\Gr(M)$ on $\Omega$ by the rule that $x\sim y$ in $\Gr(X)$
if and only if there is no element $f\in M$ satisfying $xf=yf$. The next
result gives some simple properties of this construction. This is an
extension of the methods in \cite{ck}.

\begin{theorem}
\begin{itemize}
\item[(a)] $\Gr(M)$ has complete core; that is, its clique number and chromatic
number are equal.
\item[(b)] $M\le\End(\Gr(M))$.
\end{itemize}
\end{theorem}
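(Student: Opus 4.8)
The plan is to establish (b) first, then use it for (a). For (b), fix $f\in M$ and suppose $x\sim y$ in $\Gr(M)$, i.e.\ no $h\in M$ satisfies $xh=yh$. I must show $xf\sim yf$. First, $xf\ne yf$, since otherwise $f$ itself (an element of $M$) would witness $x\not\sim y$. Second, if some $h\in M$ satisfied $(xf)h=(yf)h$, then $fh\in M$ and $x(fh)=y(fh)$, again contradicting $x\sim y$. Hence $xf\sim yf$, so every $f\in M$ is a graph endomorphism of $\Gr(M)$, which is exactly $M\le\End(\Gr(M))$. The only thing to watch is the composition convention: the paper uses right actions, so $x(fh)=(xf)h$ and $fh\in M$ means ``apply $f$, then $h$''; closure of $M$ under products is all that is used.

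For (a) write $\Gamma=\Gr(M)$ on the vertex set $\Omega$. Since $\chi(\Gamma)\ge\omega(\Gamma)$ always holds, it suffices to exhibit a homomorphism from $\Gamma$ onto a complete graph $K_r$ with $r\le\omega(\Gamma)$. I would pick $f\in M$ of minimal rank $r=|\Omega f|$ (possible, as ranks are positive integers), and claim that its image $C=\Omega f$ is a clique of $\Gamma$. Indeed, if $x,y\in C$ were distinct and non-adjacent, there would be $h\in M$ with $xh=yh$; then $fh\in M$ and $h$ is not injective on $C$, so $\rank(fh)=|Ch|\le r-1$, contradicting minimality of $r$. Thus any two distinct points of $C$ are adjacent, so the induced subgraph on $C$ is a copy of $K_r$ and $r\le\omega(\Gamma)$. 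By part (b), $f\colon\Gamma\to\Gamma$ is a graph homomorphism with image inside $C\cong K_r$, hence a homomorphism $\Gamma\to K_r$, i.e.\ a proper $r$-colouring; therefore $\chi(\Gamma)\le r\le\omega(\Gamma)\le\chi(\Gamma)$, forcing equality throughout. Finally, the inclusion $C\hookrightarrow\Gamma$ together with $f$ shows $\Gamma$ is homomorphically equivalent to $K_r$, so its core is $K_r=K_{\omega(\Gamma)}$, which is complete.

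I do not anticipate a genuine obstacle: the argument is the classical ``minimal rank image is a clique'' trick, and the contraposition in (b). The points requiring care are purely bookkeeping: the right-action convention for composition, and the degenerate case in which $M$ contains a constant map, where $r=1$ and the statement simply reads $\omega(\Gamma)=\chi(\Gamma)=1$, i.e.\ $\Gamma$ is edgeless — consistent and harmless. One should also note that only closure of $M$ under products (not the identity element) is actually invoked, which is worth a remark since it shows the result applies to any transformation semigroup, not just monoids.
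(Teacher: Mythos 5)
Your proposal is correct and follows essentially the same route as the paper: the contrapositive/contradiction argument with $fh\in M$ for part (b), and the "image of a minimal-rank element is a clique, and that element is a proper colouring" argument for part (a), with only the cosmetic difference that you prove (b) first and cite it to get the colouring, while the paper argues the colouring step directly.
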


\paragraph{Proof} (a) Let $f$ be an element of minimal rank in $M$. Then
the induced subgraph on the image of $f$ is complete; for if $x,y$ are not
joined, then there exists $g\in M$ with $xg=yg$, so $fg$ has smaller rank
than $f$. Now the map $f$ is a colouring of $\Gr(M)$ (since if two vertices
have the same image under $f$ they cannot be adjacent), and the number of
colours is equal to the rank of $f$.

(b) Take $f\in M$, and suppose that $f$ is not an endomorphism of $\Gr(M)$.
Now $f$ cannot collapse an edge of $\Gr(M)$ to a single vertex, by definition;
so it must map an edge $xy$ to a non-edge $uv$. But then there is $g\in M$
with $ug=vg$; so $x(fg)=y(fg)$, a contradiction.

\begin{theorem}
Let $f$ be a map not synchronized by the permutation group $G$. Then there is
a $G$-invariant graph $X$ with $f\in\End(X)$ and $\omega(X)=\chi(X)$.
\end{theorem}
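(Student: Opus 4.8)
The plan is to take the monoid $M=\langle G,f\rangle$ and set $X=\Gr(M)$, then verify it has all the required properties. The previous theorem does most of the work: part (a) gives immediately that $\omega(X)=\chi(X)$, and part (b) gives $M\le\End(X)$, hence in particular $f\in\End(X)$ and $G\le\End(X)$. So the only genuinely new thing to check is that $X$ is $G$-invariant, i.e.\ that every $g\in G$ is an \emph{automorphism} (not merely an endomorphism) of $X=\Gr(M)$. Since $G$ is a group, this is easy: if $g\in G\le M$ then $g^{-1}\in G\le M$ as well, and by part (b) both $g$ and $g^{-1}$ are endomorphisms of $X$; a bijective endomorphism whose inverse is also an endomorphism is an automorphism. (Concretely: $g$ preserves edges because it is an endomorphism, and it preserves non-edges because $g^{-1}$ is an endomorphism, so $xg\sim yg \iff x\sim y$.) Thus $G\le\Aut(X)$, which is the definition of $X$ being $G$-invariant.

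The one remaining subtlety is that the statement should really be about a \emph{nontrivial} graph — we should make sure $X$ is not the complete graph or the null graph, since otherwise the statement is vacuous. Here is where the hypothesis that $G$ does \emph{not} synchronize $f$ enters: if $\Gr(M)$ were complete, then the minimal-rank element $f_0\in M$ from the proof of part (a) would have complete image, but the image being complete and $f_0$ being a colouring forces $\rank(f_0)=|\Omega|$ only if the graph is complete on all of $\Omega$... more to the point, if $\Gr(M)=K_\Omega$ then no two distinct points are ever identified by any element of $M$, so every element of $M$ is a permutation, contradicting $f\notin S_n$. And $\Gr(M)$ is never the null graph: if it were, then some single element of $M$ would identify \emph{every} pair of points, i.e.\ $M$ contains a constant map, again contradicting that $G$ does not synchronize $f$. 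So under the hypothesis, $X=\Gr(M)$ is a proper nontrivial graph, and the theorem is non-vacuous.

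I would therefore write the proof in essentially three short moves: (1) define $M=\langle G,f\rangle$ and $X=\Gr(M)$, and invoke the previous theorem to get $\omega(X)=\chi(X)$ and $M\le\End(X)$, so $f\in\End(X)$; (2) observe $G\le M$ and $G$ is a group, so for each $g\in G$ both $g,g^{-1}\in M\le\End(X)$, whence $g\in\Aut(X)$ and $X$ is $G$-invariant; (3) note that if $\Gr(M)$ were complete then every element of $M$ would be injective, contradicting $f\notin S_n$ (so the conclusion is not vacuous — this step is optional depending on how the authors want to phrase "graph"). I do not expect any real obstacle here; the content is entirely front-loaded into the preceding theorem, and the present statement is a one-line corollary of it once one remembers that endomorphisms of a group element's worth of symmetry upgrade to automorphisms. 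If anything, the only thing to be careful about is making the role of the non-synchronizing hypothesis explicit, since the previous theorem's statement did not need it.
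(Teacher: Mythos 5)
Your proposal is correct and follows exactly the paper's route: the paper's entire proof is the one line ``Let $M=\langle G,f\rangle$ and $X=\Gr(M)$,'' with everything else delegated to the preceding theorem. Your additional checks (that $G\le\Aut(X)$ since $g,g^{-1}\in\End(X)$, and that non-synchronization rules out the complete and null graphs) are just explicit versions of details the authors leave implicit, so there is no substantive difference.
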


\begin{proof} Let $M=\langle G,f\rangle$ and $X=\Gr(M)$.
\end{proof}

We conclude that a group $G$ is non-synchronizing if and only if there is a $G$-invariant graph $X$, not complete or null, with $\omega(X)=\chi(X)$. 

\section{Second method: results of Godsil and Royle}

Godsil and Royle~\cite{gr} have shown that certain strongly regular graphs $X$
are what they call \emph{pseudo cores}: this means that every endomorphism of
$X$ is either an automorphism or a colouring. (We recall that a  core is a graph for
which every endomorphism is an automorphism.)

If we can show that a primitive permutation group $G$ has the property that
the only $G$-invariant graphs $X$ with $\omega(X)=\chi(X)$ are pseudocores,
then we have shown that $G$ is almost synchronizing; for if $f$ is not
synchronized by $G$, then $f$ is a colouring of a $G$-invariant graph $X$;
so $f$ has minimal rank in $\End(X)$, and is uniform.

In each case, a necessary condition is that any clique of maximum size in
the graph is a line of the corresponding geometry. There is an inequality
which guarantees this, which we have placed in square brackets. If this
condition could be shown directly, this inequality would not be required.
No condition is required for generalized quadrangles, since an edge lies
in a unique maximal clique, namely a line.

Among the graphs that Godsil and Royle show to be pseudo cores are
\begin{itemize}
\item[(a)] the line graphs of $2$-$(v,k,1)$ designs with $k>2$
[and $v>k(k^2-2k+2)$].
\item[(b)] the graphs on $n^2$ points obtained from orthogonal arrays
$O(2,k,n)$, where $k>2$ [and $n>(k-1)^2$].
\item[(c)] The collinearity graphs of generalized quadrangles with $s,t>1$.
\end{itemize}
We refer to their paper for definitions.

\section{Second method: Examples}

\begin{theorem}
\begin{enumerate}
\item[(a)] Let $G$ be a subgroup of $\PGamL(n,q)$ containing $\PSL(n,q)$, where
$n\ge5$, acting on the lines of the projective space. Then $G$ is almost
synchronizing.
\item[(b)] Let $G$ be the semidirect product of the additive group of $\GF(p^2)$
by the subgroup of index~$2$ in the multiplicative group of the field, where
$p$ is prime. Then $G$ is almost synchronizing.
\item[(c)] Let $G$ be the symplectic group $\PSp(4,q)$ or be obtained from it
by adjoining field automorphisms, where $q$ is a power of $2$. Then $G$ is
almost synchronizing.
\end{enumerate}
\end{theorem}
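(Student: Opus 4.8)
The plan is to verify, in each of the three cases, the hypothesis needed to run the "pseudocore" argument: namely that every $G$-invariant graph $X$ (other than the complete and null graphs) with $\omega(X)=\chi(X)$ is one of the pseudocore graphs from the list of Godsil--Royle, and moreover that in such a graph a maximum clique is always a line of the underlying geometry. Once that is established, the argument in Section 6 applies verbatim: if $f$ is not synchronized by $G$, then $f$ is a colouring of such an invariant graph $X$, hence has minimal rank in $\End(X)$, hence is uniform; so $G$ synchronizes every non-uniform map, i.e.\ $G$ is almost synchronizing.

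First I would identify the $G$-invariant graphs in each case. For (a), $G$ contains $\PSL(n,q)$ with $n\ge 5$ acting on lines of $\mathrm{PG}(n-1,q)$; the orbitals of this action are well understood (two lines are related according to whether they meet or are skew, and for larger $n$ there are further orbitals indexed by the dimension of the span), so the $G$-invariant graphs are unions of these orbital graphs. I would argue that the only such graph with clique number equal to chromatic number is the graph in which two lines are adjacent when they meet: this is exactly the line graph of the $2$-$(v,k,1)$ design whose points are the points of the projective space and whose blocks are its lines, so case (a) of the Godsil--Royle list applies, and the bracketed inequality $v>k(k^2-2k+2)$ must be checked to hold for $n\ge 5$ (here $v=(q^n-1)/(q-1)$ and $k=q+1$), guaranteeing that maximum cliques are pencils of lines through a point, i.e.\ "lines" of the design. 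For (b), $G$ is the affine group $\GF(p^2)\rtimes H$ with $H$ of index $2$ in the multiplicative group; this group is primitive of degree $p^2$, and I would show its invariant graphs are precisely the two Paley-type graphs arising from the index-$2$ subgroup, which are obtained from an orthogonal array $O(2,k,p)$ in the required way, invoking case (b) of the list and its inequality $n>(k-1)^2$. For (c), $G=\PSp(4,q)$, $q$ even, acting on the points of $W(q)$; as already noted in the first-method discussion, the only invariant graphs are the collinearity graph of the generalized quadrangle and its complement, and by case (c) of the Godsil--Royle list the collinearity graph of a GQ with $s,t>1$ is a pseudocore, with no extra inequality needed since an edge lies in a unique maximal clique (a line).

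In each case it then remains to dispose of the complement graph (and, in case (a), of the higher orbital graphs): one must check that these do \emph{not} satisfy $\omega=\chi$, so that the only relevant invariant graph really is the pseudocore one. For the complements of collinearity graphs of GQs this is standard (the clique number is small while the chromatic number is forced up by the eigenvalue/Hoffman-type bounds), and for the skew-lines graph in the Grassmannian case one argues similarly. Finally one assembles the pieces: a non-uniform $f$ lies in $\End(X)$ for some invariant $X$ with $\omega(X)=\chi(X)$, which must be a pseudocore with maximal cliques equal to geometric lines; an endomorphism of a pseudocore is an automorphism or a colouring; $f$ is singular so not an automorphism; hence $f$ is a colouring and therefore uniform, contradicting non-uniformity. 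Thus $G$ synchronizes $f$.

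The main obstacle I expect is the determination of \emph{all} $G$-invariant graphs and the verification that the "wrong" ones fail $\omega=\chi$ --- in particular for case (a) with general $n\ge 5$, where the Grassmannian scheme has several classes and one must rule out every graph other than the meet-graph, and for case (b), where one must pin down exactly which graphs the index-$2$ multiplicative subgroup produces and confirm they match the orthogonal-array construction with parameters satisfying $n>(k-1)^2$. The pseudocore input and the final synchronization deduction are then routine given Sections 5--6.
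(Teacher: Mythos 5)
Your overall strategy is the paper's: reduce to the statement that every $G$-invariant graph with $\omega=\chi$ is a pseudocore whose maximum cliques are lines of the geometry, then quote Godsil--Royle and the Section 5--6 reduction. However, two of your three cases have genuine gaps. In case (b) your plan cannot be carried out as written: the two invariant graphs come from an orthogonal array $O(2,(p+1)/2,p)$, so in the Godsil--Royle notation $k=(p+1)/2$ and $n=p$, and the bracketed condition $n>(k-1)^2$ reads $p>\left((p-1)/2\right)^2$, which fails for every prime $p\ge7$. So you cannot ``invoke case (b) of the list and its inequality''; one must prove separately that the only cliques of size $p$ are affine lines. The paper supplies exactly this missing ingredient via R\'edei's Theorem 24$'$: a set of $p$ points of $\mathrm{AG}(2,p)$ that is not a line determines at least $(p+3)/2$ directions, while only $(p+1)/2$ directions give adjacency, so no non-line set of size $p$ is a clique; hence both graphs are pseudocores. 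Without such a substitute your case (b) is incomplete.

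In case (c) you have the two graphs' roles reversed. For $W(q)$ with $q$ even, ovoids and spreads both exist, so the complement of the collinearity graph has clique number and chromatic number both equal to $q^2+1$; it does \emph{not} fail $\omega=\chi$, so your plan of ``disposing'' of it by an eigenvalue bound is doomed. It is the collinearity graph itself that fails $\omega=\chi$: its clique number is $q+1$, but there is no partition into ovoids because, by Butler's theorem, any two ovoids of $W(q)$ meet in an odd number of points. The paper's argument rests precisely on this interplay --- ovoids and spreads exist but no ovoid partition --- so the invariant graph that could witness non-synchronization is the complement, and applying Godsil--Royle's generalized-quadrangle result only to the collinearity graph, as you propose, does not finish the argument. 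A smaller point on case (a): the action on lines is rank $3$ for every $n\ge4$ (two distinct lines either meet or are skew), so there are no ``further orbitals indexed by the dimension of the span''; the only graph to exclude is the skew-lines graph, which the paper handles by a counting argument --- its cliques are partial spreads of size at most $(q^n-1)/(q^2-1)$, while its maximum cocliques are the pencils of lines through a point, any two of which intersect, forcing the chromatic number strictly above that bound --- which is the content your ``Hoffman-type'' sketch would have to deliver.
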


\paragraph{Proof} (a) Here $G$ is a rank~$3$ group, and so there are just two
non-trivial $G$-invariant graphs, the concurrence graph of the lines of the
projective space and its complement.

The lines form a $2$-$(v,q+1,1)$ design, where $v=(q^n-1)/(q-1)$. We see that
$v>k(k^2-2k+2)$ if and only if $n\ge5$. So in this case, the graph is a
pseudocore. (The cliques of maximum size consist of all lines through a point.
If $n=4$, there are further such cliques, namely all the lines in a plane, and
the method fails.)

In the complementary graph, the clique number is at most $(q^n-1)/(q^2-1)$,
since clearly we cannot find more than this number of disjoint lines. However,
a maximal coclique has size $(q^{n-1}-1)/(q-1)$ (these are cliques in the
complementary graph, and consist of all lines through a point), and any two
of these cocliques intersect; so the chromatic number is strictly greater
than
\[\frac{(q^n-1)(q^{n-1}-1)/(q^2-1)(q-1)}{(q^{n-1}-1)/(q-1)},\]
the numerator being the total number of vertices; so the clique number and
chromatic number are not equal.

Note that the geometry has a \emph{parallelism} (a partition of the lines
into spreads)  only if $n$ is even; so for $n$ odd, the groups are
synchronizing. Parallelisms have been shown to exist for $n=4$ \cite{beutel} and $q=2$ \cite{baker}, and are conjectured to exist for all even $n$.

\medskip

(b) Again the group has rank~$3$, and the two $G$-invariant graphs correspond
to orthogonal arrays $(2,(p+1)/2,p)$. The vertices in each case are points of
the affine space, and the $p+1$ directions of lines are partitioned into two
subsets of $(p+1)/2$ such that in each graph, two vertices are joined if the
line joining them has direction lying in the corresponding set.
In this case, the inequality given in square brackets is not satisfied; but
the graphs are pseudo cores provided that we can prove otherwise that there
are no cliques of size $p$ other than lines of the affine space.

This follows from Theorem 24' of R\'edei~\cite{redei}, according to which a set
of $p$ points in the affine plane which is not a line determines at least
$(p+3)/2$ directions. Since only $(p+1)/2$ directions correspond to
adjacency, no such set is a clique.

So both graphs are pseudo cores, and the result follows.

\medskip

(c) For any classical generalized quadrangle, the automorphism group is a
rank~$3$ group whose invariant graphs are the collinearity graph and its
complement. The number of points is $(s+1)(st+1)$, and a line has $s+1$
points.

The collinearity graph has clique number $s+1$ (the cliques are lines);
an independent set meet each line in at most one point, and so has size at
most $st+1$, with equality if and only it is an ovoid. So the chromatic
number is $s+1$ if and only if there is a partition into ovoids.

In the complement of the collinearity graph, as we have seen, the clique
number is at most $st+1$, with equality if and only if there is an ovoid; the
chromatic number is at least $st+1$, with equality if and only if there is a
spread (a set of lines partitioning the point set).

So one of these groups is almost synchronizing if the generalized quadrangle
has ovoids and spreads but no partition into ovoids.

The symplectic generalized quadrangles in even characteristic have these
properties. Note that these examples are  also 
covered by our first method.

\section{Some more examples}

Here is a class of examples that use the same technique as that of Godsil and
Royle but are not covered by their results. The symmetric group $S_m$
acting on $2$-sets (for $m\ge5$) is synchronizing if and only if $m$ is odd.
It is the automorphism group of a $2$-$(m,2,1)$ design; but this is not
covered since Godsil and Royle assume that the block size is greater than $2$.

\begin{theorem}
The symmetric group $S_m$ acting on $2$-sets is almost synchronizing if $m$
is even and $m\ge6$.
\end{theorem}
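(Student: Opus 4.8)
The plan is to set up a $G$-invariant graph on $2$-sets, identify its clique number and chromatic number, and then apply the pseudocore machinery of Section 6 by hand, since Godsil and Royle's results do not cover block size $2$. Here $G=S_m$ acting on $2$-subsets of $\{1,\dots,m\}$ is a rank~$3$ group, so the only non-trivial $G$-invariant graphs are the triangular graph $T(m)$ (two $2$-sets adjacent iff they meet) and its complement, the Kneser graph $K(m,2)$ (two $2$-sets adjacent iff they are disjoint). By the remark at the end of Section~5, $G$ is non-synchronizing precisely because one of these two graphs has $\omega=\chi$; and the task is to show that the only non-invertible $f$ not synchronized by $G$ are uniform, i.e.\ that every $f\in\End(X)$ of minimal rank (for either invariant graph $X$ with $\omega(X)=\chi(X)$) has uniform kernel, which by part~(a) of the Section~5 theorem is automatic once we know $f$ is a colouring — so really the point is to show every endomorphism of $X$ is either an automorphism or a colouring, i.e.\ that $X$ is a pseudocore.

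First I would dispose of the complement, $K(m,2)$: for $m$ even, a maximum clique is a perfect matching (size $m/2$), while a maximum independent set is a ``star'' of all $2$-sets through a fixed point (size $m-1$) by Erd\H{o}s--Ko--Rado, and since any two stars intersect, $\chi(K(m,2))$ strictly exceeds $\binom{m}{2}/(m-1)=m/2=\omega$; so this graph never has $\omega=\chi$ and plays no role. Hence the relevant graph is the triangular graph $T(m)$, where $\omega(T(m))=m-1$ (a star) for $m\ge 5$ and $\chi(T(m))=m-1$ exactly when $m$ is even (a proper edge-colouring of $K_m$, i.e.\ a $1$-factorization, gives an $(m-1)$-colouring of $T(m)$; and $m$ even is precisely when $K_m$ is Class~1). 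Thus a non-invertible $f$ not synchronized by $G$ has minimal rank $m-1$ in $\End(T(m))$, and its image is a clique of size $m-1$. The key structural fact I would establish is: \emph{for $m\ge 6$, every maximum clique of $T(m)$ is a star} (the only other maximum cliques occur at $m=4$, namely triangles $\{12,13,23\}$-type sets, and these have size $3=m-1$ there but are genuinely different; for $m\ge 5$ a triangle has size $3<m-1$, so stars are the unique maximum cliques). Given this, the image of $f$ is a star centred at some point, say $m$; then $f$ restricted to the ``link'' structure must carry the combinatorial data of $T(m)$ compatibly, and I would argue that $f$ must be a colouring by showing its fibres partition the $\binom{m}{2}$ edges into $m-1$ classes each of which is an independent set in $T(m)$ — but an independent set in $T(m)$ is a matching in $K_m$, and $m-1$ disjoint matchings covering all $\binom{m}{2}$ edges with each of size exactly $m/2$ is forced by counting, giving a $1$-factorization; hence $f$ is uniform with all blocks of size $m/2$.

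The main obstacle is showing that an endomorphism $f$ of $T(m)$ of rank $m-1$ is necessarily a \emph{proper} colouring — equivalently that each fibre of $f$ is an independent set — rather than merely having image a clique. Since $f$ is a graph homomorphism it cannot collapse an edge, so each fibre is automatically independent; so actually this is immediate, and the real content is only the clique-classification lemma (maximum cliques of $T(m)$ are stars for $m\ge 6$, hence $f$ has exactly $m-1$ fibres each a matching of $K_m$, forcing all fibres to have size $m/2$). I would prove the clique lemma by the standard argument: in a clique of $T(m)$ the $2$-sets pairwise intersect, so by the sunflower/Helly-type dichotomy for intersecting families of $2$-sets, either they all share a common point (a star, size $\le m-1$) or they form a triangle $\{a,b\},\{b,c\},\{a,c\}$ (size $3$); for $m\ge 6$ only the star can be maximum. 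Finally, one checks $m\ge 6$ rather than $m\ge 5$ is needed because at $m=5$, although $5$ is odd so the group is synchronizing and the statement is vacuous there, the even-$m$ hypothesis plus $m\ge 6$ is exactly the regime where $T(m)$ has $\omega=\chi=m-1$ with stars the unique maximum cliques, so the argument goes through and $G=S_m$ is almost synchronizing.

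\begin{proof}
As $S_m$ on $2$-sets is a rank~$3$ group, its only non-trivial invariant graphs are the triangular graph $T(m)$ (adjacency: intersecting $2$-sets) and the Kneser graph $K(m,2)=\bgam$ (adjacency: disjoint $2$-sets). For $K(m,2)$ with $m$ even, a maximum clique is a perfect matching of size $m/2$, while by Erd\H{o}s--Ko--Rado a maximum independent set is a star of size $m-1$; any two stars meet, so $\chi(K(m,2))>\binom{m}{2}/(m-1)=m/2=\omega(K(m,2))$, and this graph never has $\omega=\chi$. So by the criterion at the end of Section~5 the only relevant invariant graph is $X=T(m)$, with $\omega(X)=m-1$ and, since $m$ is even and hence $K_m$ is $1$-factorizable, $\chi(X)=m-1$.

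Let $f\in T_{\binom{m}{2}}\setminus S_{\binom{m}{2}}$ be not synchronized by $G$. By the Section~5 theorems, $f\in\End(X)$ and $f$ has minimal rank among elements of $\End(X)$; this rank is $\omega(X)=\chi(X)=m-1$, the image of $f$ is a clique of size $m-1$, and each fibre of $f$ is an independent set of $X$ (a homomorphism cannot collapse an edge). We claim that for $m\ge 6$ every clique of size $m-1$ in $T(m)$ is a star. Indeed, a clique of $T(m)$ is a pairwise-intersecting family of $2$-sets; such a family either has a common point (a substar, of size at most $m-1$) or is a ``triangle'' $\{\{a,b\},\{b,c\},\{a,c\}\}$ of size $3$. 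For $m\ge 6$ we have $3<m-1$, so a clique of size $m-1$ must be a star.

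Thus $\im(f)$ is a star, and the $m-1$ fibres of $f$ are independent sets of $T(m)$, i.e.\ matchings of $K_m$, partitioning all $\binom{m}{2}$ edges of $K_m$. Since $m-1$ matchings cover $\binom{m}{2}=(m-1)\cdot(m/2)$ edges, each has size at most $m/2$ and hence exactly $m/2$. Therefore every block of $\Ker(f)$ has size $m/2$; that is, $\Ker(f)$ is uniform. Consequently no map with non-uniform kernel can fail to be synchronized by $G$, so $\langle G,t\rangle$ contains a constant whenever $\Ker(t)$ is non-uniform; that is, $S_m$ on $2$-sets is almost synchronizing for even $m\ge6$.
\end{proof}
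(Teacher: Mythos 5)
There is a genuine gap, and it sits exactly where the paper's real work is done. You assert that a non-synchronized map $f$ ``has minimal rank among elements of $\End(X)$'', so that its rank is $m-1$ and its image is an $(m-1)$-clique. Nothing in Section~5 gives you this: the theorem there only produces a $G$-invariant graph $X$ with $\omega(X)=\chi(X)$ and $f\in\End(X)$; the statement that the image is a clique is proved there only for elements of \emph{minimal} rank in the monoid $\langle G,f\rangle$, and the remark in Section~6 that ``$f$ is a colouring, hence of minimal rank and uniform'' is explicitly conditional on first knowing that $X$ is a pseudocore. Worse, under your own hypotheses the claim is false rather than merely unjustified: you are arguing toward showing that a non-synchronized $f$ has uniform kernel, i.e.\ the interesting case is $\Ker(f)$ non-uniform, and by Neumann's theorem every element of minimal rank $>1$ in $\langle G,f\rangle$ has uniform kernel, so such an $f$ cannot be of minimal rank. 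A priori an endomorphism of the triangular graph $T(m)=L(K_m)$ could have rank strictly between $m-1$ and $\binom{m}{2}$ with image not a clique, and your argument does nothing to exclude this; once you \emph{assume} rank $m-1$, the rest (fibres are matchings, counting forces size $m/2$) is easy, which is why your ``main obstacle'' paragraph finds no obstacle.

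The missing ingredient is precisely the pseudocore property of $L(K_m)$: every endomorphism that is not an automorphism maps the whole graph into a single maximum clique (hence is a colouring). This cannot be quoted from Godsil--Royle (block size $2$ is excluded), and it is the content of the paper's proof: one fixes the cliques $C_i$ of all $2$-sets through a point $i$, uses the fact that the octahedron on a $4$-set of points is a pseudocore, and shows that if $f$ identifies two nonadjacent vertices, say $12$ and $34$, then successively the octahedron $1234$, then $C_2$, then every pair $ij$ (via the octahedron $12ij$) is forced into $C_1$. Your clique-classification lemma (maximum cliques of $T(m)$ are stars for $m\ge5$) and your dismissal of the Kneser complement are fine and agree with the paper, but without the octahedron-type argument (or some substitute showing every non-automorphism endomorphism of $T(m)$ is a colouring) the proof does not go through.
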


\paragraph{Proof}
As usual the groups have rank~$3$, and the graphs we have to consider are the
line graph of the complete graph $K_m$ and its complement.

The line graph of $K_m$ has clique number $m-1$ (take all the edges containing
a given point) and chromatic number $m-1$ if $m$ is even (take a
$1$-factorization of $K_m$). The complement has clique number $m/2$ and
chromatic number strictly greater. (This can be seen by an argument similar to
the one we used for the projective space: the maximal cliques have size $m-1$
and any two intersect. In fact a result of Lov\'asz~\cite{lovasz} shows that
the chromatic number is $m-2$.) So we only need consider the line graph.

Suppose that $f$ is an endomorphism of $L(K_m)$ which is not an
automorphism. Let $C_i$ denote the $(m-1)$-clique consisting of all edges
through the point $i$. Each such clique must be mapped to another by $f$;
we have to show that they all collapse to a single clique.

We will use the fact that the octahedron (the induced subgraph on the six
$2$-subsets of a $4$-set) is a pseudo-core, that is, every endomorphism which
is not an automorphism maps it onto a triangle. This is easy to prove
directly. Below we will say ``the octahedron $abcd$'' to mean the octahedron
formed by the six pairs from this set.

Suppose that $12$ and $34$ map to the same point, which we may suppose to
be $12$. Then $C_1$ maps to either $C_1$ or $C_2$; without loss of generality,
$C_1$ is mapped to $C_1$.

Consider the octahedron $1234$. Since $12$ and $34$ map to
$12$, and $13$ and $14$ both map into $C_1$, the whole
set maps into $C_1$.

Next we claim that $C_2$ maps to $C_1$. For it contains three points $12$,
$23$, $24$ which all map into $C_1$.

Finally, every pair $ij$ maps into $C_1$. For consider the octahedron
$12ij$; we know that all except $ij$ map into $C_1$, so $ij$ does as
well.

This method also applies to the alternating group $A_{n}$ and the Mathieu groups $M_{12}$ and $M_{24}$. 

\section{Complexity}

The person inside the dungeon wants to find as quickly as possible if the
permutations in the automaton generate an almost synchronizing group. So
that person needs a fast algorithm to decide the question in terms of the
given group generators.

There exist efficient polynomial-time algorithms for deciding whether a
permutation group with a given set of generators is transitive, or primitive,
or $2$-transitive. We refer to Seress~\cite{seress} for a survey of these
algorithms. Indeed, if we are given an unnecessarily large set of generators,
it can be transformed with \emph{polynomial delay} into a set of size at
most $n-1$ (this means that a polynomial amount of computation is done after
reading each generator). After this has been done, the tests for transitivity
etc.\ are polynomial in $n$.

The synchronizing property lies between primitivity and $2$-transitivity, and
currently no efficient algorithm is known. In fact, the best known algorithm
is based on the considerations in Section 5. It consists of the following
steps:
\begin{itemize}
\item[(a)] construct all the non-trivial $G$-invariant graphs;
\item[(b)] for each such graph $X$, decide whether $\omega(X)=\chi(X)$.
\end{itemize}
In the first step, the number of graphs is $2^r-2$, where $r$ is the number of
$G$-orbits on the set of $2$-subsets. Although this is exponential, for many
primitive groups the number $r$ is relatively small. The second step, however,
is NP-hard for general graphs, though the possibility that it is easier for
graphs with primitive automorphism group remains open.

We do not currently have any reasonable algorithm for testing whether a group
is almost synchronizing. However if, as we suspect, this property is equivalent
to primitivity, there would be a polynomial-time test!

For the particular groups in our examples, recognition by standard
group-theoretic algorithms is easy and can certainly be done in random
polynomial time.

\section{Problems}

The results in this paper prompt a number of very natural problems that might attract the attention of experts in computer science, combinatorics and geometry, groups and semigroups, linear algebra and matrix theory. 

We could find no group $G$ such that $m(G)=2$ and $M(G)>\frac{1}{2}$ and hence we do not know if the latter condition is redundant. 

\begin{prob}
Is it true that $m(G)=2$ implies $M(G)\leq \frac{1}{2}$?
\end{prob}

Even if this is not true, what really matters regarding the classification of almost non-synchrozoning groups, is to prove a result similar to  Theorem \ref{main}, but with no assumptions on $M(G)$. 

\begin{prob}
Is it possible to prove Theorem \ref{main} without any assumption about $M(G)$?
\end{prob}

Since we have a general result linking almost non-synchronizing groups and  parameter 2  non-synchronizing groups, the next goal is a classification of these groups. 

\begin{prob}
Classify the primitive non-synchronizing groups $G$ such that $m(G)=2$.   
\end{prob}

The parameter $m(G)$ induces an hierarchy on the class of non-synchronizing groups. We hope this hierarchy helps splitting the classification of synchronizing groups and almost synchronizing groups down to more tractable subclasses. 

\begin{prob}
For each $k\geq 2$, classify the primitive almost synchronizing groups such that $m(G)=k$. 
\end{prob}

The main conjecture in this paper is that primitive groups are almost synchronizing. 

\begin{prob}
Is it true that primitive groups are almost synchronizing?
\end{prob}

In this setting, and regarding algorithms,   the main problem is the following. 

\begin{prob}
Find an efficient algorithm to decide if a given group is synchronizing. 
\end{prob}

\paragraph{Acknowledgment}
We are grateful to Simeon Ball and Tim Penttila for help with the literature
on ovoids and spreads in symplectic generalized quadrangles.

The first author was partially supported  by FCT through the following projects: Strategic Project of Centro de \'Algebra da Universidade de Lisboa (PEst-OE/MAT/UI1043/2011); and Project Computations in groups and semigroups (PTDC/MAT/101993/2008).

The e second author  has received funding from the
European Union Seventh Framework Programme (FP7/2007-2013) under
grant agreement no.\ PCOFUND-GA-2009-246542 and from the Foundation for 
Science and Technology of Portugal.

The third author is grateful to the Center of Algebra of the University of Lisbon 
for supporting a visit to the Centre in which some of this research was done.


\begin{thebibliography}{99}
\bibitem{AnVo}
D. S. Ananichev and M. V. Volkov, Some results on \v{C}ern\'{y} type
problems for transformation semigroups, Semigroups and languages,
23--42, World Sci. Publ., River Edge, NJ, 2004.

\bibitem{ar}
J.~Ara\'ujo,
 A group theoretical approach to synchronizing automata and the \v
  {C}erny problem.
\newblock Unpublished manuscript, 2006.

\bibitem{ben}
F. Arnold and B. Steinberg, Synchronizing groups and automata.
 \textit{  Theoret. Comput. Sci.} \textbf{359} (2006), no. 1-3, 101--110.

\bibitem{baker}
R. D. Baker,
{Partitioning the planes of $\mathrm{AG}_{2m}(2)$ into 2-designs.}
 \textit{Discrete Math.} \textbf{15} (1976), no. 3, 205--211.


\bibitem{beutel}
A. Beutelspacher,
On parallelisms in finite projective spaces.
\textit{Geometriae Dedicata} \textbf{3} (1974), 35--40.


\bibitem{Butler1}
D. Butler,
On the intersection of ovoids sharing a polarity.
\textit{Geometriae Dedicata} \textbf{135} (1978), 157--165.

\bibitem{Butler2}
D. Butler,
The maximum size of intersection of two ovoids.
\textit{J. Combinatorial Theory} (A) \textbf{116} (2009), 242--245.


\bibitem{ck}
P. J. Cameron and P. A. Kazanidis,
Cores of symmetric graphs.
\textit{J. Austral. Math. Soc.} textbf{85} (2008), 145--154.

\bibitem{DiMo96}
J.D. Dixon and B. Mortimer, Permutation Groups,  Graduate Texts
in Mathematics, 163, Springer-Verlag, New York, 1996.

\bibitem{epp}
D. Eppstein, Reset sequences for monotonic automata.
{\em SIAM J. Comput.} {\bf 19} (1990), 500--510.

\bibitem{Glynn}
D. Glynn, ``Finite projective planes and related combinatorial systems'',
PhD thesis, University of Adelaide, 1978.


\bibitem{gr}
C. D. Godsil and G. F. Royle.
{Cores of geometric graphs.}
\newblock{\textit{Ann. Combinatorics} \textbf{15} (2011), 267--276.}


\bibitem{lovasz}
L. Lov\'asz,
Kneser's conjecture, chromatic number, and homotopy.
\textit{J. Combinatorial Theory} Ser.~A \textbf{25} (1978), 319--324.



\bibitem{pmn}
P. M. Neumann, Primitive permutation groups and their
section-regular partitions. \textit{Michigan Math. J.} {\bf 58} (2009), 309--322.


\bibitem{JEP}
J.-E. Pin, \v{C}ern\'{y}'s conjecture.
\begin{verbatim}
http://www.liafa.jussieu.fr/~jep/Problemes/Cerny.html
\end{verbatim}


\bibitem{redei}
L. R\'edei,
{Lacunary Polynomials over Finite Fields.}
North-Holland, Amsterdam, 1973.

%

\bibitem{seress}
\'A. Seress, \textit{Permutation Group Algorithms}, Cambridge Tracts in
Mathematics \textbf{152}, Cambridge University Press, Cambridge, 2003.

\bibitem{Tr}
A.N. Trahtman, Bibliography,  synchronization \@  TESTAS
\begin{verbatim}
http://www.cs.biu.ac.il/~trakht/syn.html
\end{verbatim}

\bibitem{Tr11}
A.N. Trahtman, \textit{Modifying the upper bound on the length of minimal synchronizing word},  
Lect. Notes in Comp. Sci,  Springer, {\bf 6914} (2011), 173--180.


\bibitem{Vo}
M. Volkov, Synchronizing finite automata
\begin{verbatim}
http://csseminar.kadm.usu.ru/SLIDES/synchrolectures/lecture1.pdf
\end{verbatim}

\end{thebibliography}
\end{document}